\title{The Bloch--Ogus Theorem over DVR}
\author{Ivan Gaidai-Turlov}
\address{Department of Mathematics and Computer Science, St Petersburg University, Saint Petersburg, Russia}
\email{gtivansan@gmail.com}
\date{26.11.2025}
\newcommand{\AffSp}{\mathbb{A}}
\newcommand{\ProjSp}{\mathbb{P}}
\newcommand{\Zar}{\normalfont\text{Zar}}
\newcommand{\et}{\normalfont\text{\'et}}
\DeclareMathOperator{\spec}{Spec}
\DeclareMathOperator{\codim}{codim}
\DeclareMathOperator{\gys}{gys}
\DeclareMathOperator{\tr}{Tr}
\DeclareMathOperator{\E}{E}
\newtheorem{theorem}{Theorem}[section]
\newtheorem{definition}[theorem]{Defenition}
\newtheorem{lemma}[theorem]{Lemma}
\newtheorem{corollary}[theorem]{Corollary}
\renewcommand{\thetheorem}{%
  \ifnum\value{subsection}>0
    \thesubsection.%
  \else
    \thesection.%
  \fi
  \arabic{theorem}%
}
\renewenvironment{proof}[1][]{\noindent{\bf Proof#1. }}{\vskip 7pt}
\newcommand*\rel@kern[1]{\kern#1\dimexpr\macc@kerna}
\newcommand*\widebar[1]{%
  \begingroup
  \def\mathaccent##1##2{%
    \rel@kern{0.8}%
    \overline{\rel@kern{-0.8}\macc@nucleus\rel@kern{0.2}}%
    \rel@kern{-0.2}%
  }%
  \macc@depth\@ne
  \let\math@bgroup\@empty \let\math@egroup\macc@set@skewchar
  \mathsurround\z@ \frozen@everymath{\mathgroup\macc@group\relax}%
  \macc@set@skewchar\relax
  \let\mathaccentV\macc@nested@a
  \macc@nested@a\relax111{#1}%
  \endgroup
}
\begin{document}

\begin{abstract}
We prove the Bloch--Ogus Theorem for regular local rings geometrically regular over a discrete valuation ring. In particular, we prove the Bloch--Ogus Theorem for regular local rings of mixed characteristic that are essentially smooth over a discrete valuation ring.
\end{abstract}

\maketitle

\section{Introduction}

The purpose of this note is to prove the Bloch--Ogus theorem \cite{Bloch1974GerstensCA} in mixed characteristic. Our main result is the following theorem:

\begin{theorem}\label{bo-th}
    Let $S$ be a regular local ring (possibly of mixed characteristic), geometrically regular over some discrete valuation ring $R$ with residue field $k$. Let $p = \operatorname{char} k$, and let $\Lambda$ be a finite $r$-torsion $\operatorname{Gal}(R)$-module with $p \nmid r$. Let $U = \spec (S)$, and let $\eta = \spec(K)$ denote the spectrum of the fraction field $K$ of $S$. Then the following Cousin complex is exact:
    \[0 \rightarrow \operatorname{H}^*_{\et}(U, \Lambda)\rightarrow \operatorname{H}^*_{\et}(\eta, \Lambda) \rightarrow \bigoplus_{x \in U^{(1)}} \operatorname{H}^{*+1}_x(U, \Lambda) \rightarrow \bigoplus_{x \in U^{(2)}} \operatorname{H}^{*+2}_x(U, \Lambda) \rightarrow \dots\]
\end{theorem}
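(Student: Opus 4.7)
The plan is to follow the classical Bloch--Ogus--Gersten strategy via the coniveau spectral sequence, adapted from a field base to the DVR base $R$. The first step is to identify the Cousin complex in the statement with the $E_1$-page of the coniveau spectral sequence for $\operatorname{H}^*_\et(U, \Lambda)$: for any $x \in U^{(c)}$, Gabber's absolute purity (applicable because $U$ is regular and $p \nmid r$) provides a canonical isomorphism $\operatorname{H}^{*+c}_x(U, \Lambda) \cong \operatorname{H}^{*-c}_\et(\kappa(x), \Lambda(-c))$, so that the Cousin complex becomes the familiar Gersten-type complex. With this identification in place, exactness for $* \geq 0$ will follow from a single uniform effacement statement by the standard argument of Colliot-Thélène--Hoobler--Kahn: it suffices to show that for every open $V \subseteq U$ containing the closed point of $S$, every closed subscheme $Z \subset V$ of codimension $\geq 1$, and every class $\alpha \in \operatorname{H}^*_Z(V, \Lambda)$, there is a smaller open $V' \subseteq V$, still containing the closed point, on which the image of $\alpha$ in $\operatorname{H}^*_\et(V', \Lambda)$ vanishes.

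The heart of the argument will then be a geometric presentation lemma over the DVR $R$. Given a pair $(V, Z)$ as above, with $V$ essentially smooth over $R$ of relative dimension $n-1$, one wants an $R$-morphism $\pi \colon V \to \AffSp^{n-1}_R$ that is étale at the closed point of $S$, whose restriction $\pi|_Z$ is finite, and such that after shrinking $V$ the scheme-theoretic preimage $\pi^{-1}(\pi(Z))$ decomposes as a disjoint union of $Z$ and another closed subscheme $Z'$ that misses the closed point of $S$. A Gabber-style presentation of this sort over a DVR has been developed in the recent literature. Combined with étale excision, Gabber's absolute purity, and $\AffSp^1$-homotopy invariance of étale cohomology with torsion coefficients coprime to $p$, it reduces the effacement statement to an essentially formal Mayer--Vietoris computation on a relative affine line over a regular base.

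The chief obstacle is the presentation lemma itself in mixed characteristic. The two fibers of $V \to \spec(R)$ live in different characteristics, and the projection $\pi$ must behave well on both simultaneously, especially near singularities of $Z$ lying on the special fiber; this is where the equicharacteristic Gabber lemma does not apply directly and genuinely new geometric input is required. Once this input is secured, the remaining cohomological steps are a comparatively mechanical transcription of the equicharacteristic Bloch--Ogus argument, so I would expect the bulk of the paper's technical work to be devoted to establishing (or carefully invoking) the appropriate mixed-characteristic version of this lemma and to checking that the standard ingredients (purity, excision, $\AffSp^1$-invariance) remain valid with $\operatorname{Gal}(R)$-module coefficients over a not-necessarily-henselian DVR.
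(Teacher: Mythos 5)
Your sketch correctly captures the strategy for the \emph{essentially smooth} case: identify the Cousin complex via purity, reduce exactness to effaceability of supports in the sense of Colliot-Th\'el\`ene--Hoobler--Kahn, and prove effaceability by a Gabber/Lindel--Ojanguren-type presentation lemma over the DVR combined with \'etale excision and $\AffSp^1$-invariance. This is indeed what the paper does in Sections 3--4 and Appendix A (with two caveats: the effacement must be proved at every codimension level $c\to c+1$, not only from codimension $\geqslant 1$ into $\E(V')$ as you state it; and the presentation lemma forces a preliminary reduction to the case of a perfect infinite residue field, carried out by a transfer/colimit argument, which your plan omits).

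The genuine gap is that the theorem is stated for $S$ \emph{geometrically regular} over $R$, not merely essentially smooth, and your plan has no mechanism for this passage. The presentation lemma is unavailable for a general geometrically regular local ring, and although Popescu's theorem writes $S$ as a filtered colimit of smooth $R$-algebras $S^\alpha$, exactness of the Cousin complex does \emph{not} pass through such a colimit directly: codimension-$c$ points of $\spec(S)$ do not arise as limits of codimension-$c$ points of the $\spec(S^\alpha)$, so the Cousin complex of the limit is not the colimit of the Cousin complexes. The paper closes this gap with Panin's method (Section 5): choosing a local parameter $f$ with $(\pi,f)$ regular, one has a short exact sequence of Cousin complexes
\[0 \to \operatorname{\Gamma Cous}(Z)[-1] \to \operatorname{\Gamma Cous}(U) \to \operatorname{\Gamma Cous}(U_f) \to 0,\]
which reformulates exactness on $U$ as cohomological conditions on the lower-dimensional schemes $U_f$ and $Z$ (injectivity of $\E(U)\to\E(U_f)$, $\E(U_f)\cong\underline{\E}(U_f)$, and vanishing of $\mathrm{H}^{i}_{\Zar}(U_f,\underline{\E})$ for $i>0$); these conditions, unlike Cousin exactness itself, do pass through the filtered colimit by continuity of \'etale cohomology, and an induction on $\dim(U)$ completes the argument. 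Without this (or an equivalent device) your proposal proves only the smooth case of the theorem.
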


Note that the equicharacteristic case of the theorem (i.e. $\Lambda$ is a $\operatorname{Gal}(F)$-module for some field $F$ сontained in $R$) is covered by the Bloch--Ogus theorem and the result of Panin \cite{PanEqui}. See also \cite{colliot1997bloch} for a good exposition and Gabber's proof of the Bloch--Ogus theorem that does not use Poincar\'e duality. See Definition~\ref{geom-reg} for the notion of geometrically regular morphisms.

There are some partial results on this problem in mixed characteristic. In \cite{Schmidt2019ABT}, Schmidt and Strunk proved the Nisnevich-local version of the theorem. In \cite{geisser2004motivic}, Geisser deduced the result for $H_{\et}^s(-,\mu_r^{\otimes n})$ for $s \leqslant n$ from the Bloch--Kato conjecture. In \cite{lüders2024blochogustheorysmoothsemistable}, Lüders generalized the method of Bloch and Ogus to mixed characteristics. His result applies to excellent base DVRs, and the proof uses Poincaré duality. Our proof is independent of all these results.

\vskip 7pt \noindent {\bf Acknowledgements.} I warmly thank Ivan Alexandrovich Panin for posing the problem, numerous discussions over tea, and support. 

This work was supported by the Ministry of Science and Higher Education of the Russian Federation (agreement 075-15-2025-344 dated 29/04/2025 for Saint Petersburg Leonhard Euler International Mathematical Institute at PDMI RAS).

\section{Preliminaries}\label{preliminaries}

Throughout the paper, we fix the following notation. Let $R$ be a discrete valuation ring, and let $\pi$ be a uniformizing parameter. Let $V = \spec R = \{v, \theta\}$, where $v$ is the closed point and $\theta$ is the generic point. Let $k = k(v) = R/\pi R$ denote the residue field of characteristic $p$. For any scheme $X$ over $V$, denote by $X_v$ and $X_\theta$ the fibers of $X$ over $v$ and $\theta$, respectively. Define 
\[\E(X) := \bigoplus_{n, m \in \mathbb{Z}} \operatorname{H}_{\et}^n(X, p^*\Lambda(m)) \quad \text{and} \quad \E_Z(X) := \bigoplus_{n, m \in \mathbb{Z}} \operatorname{H}_{Z}^n(X, p^*\Lambda(m)),\] 
where $\Lambda$ is an étale-locally constant, finite, prime-to-$p$, $r$-torsion sheaf of abelian groups on $V_{\et}$, and $p \colon X \to V$ is the structure morphism. Also denote by $\Phi_X^c$ the inductive system of supports in $X$ of codimension at least $c$. We omit the subscript $X$ when it doesn’t lead to confusion.

Let $X, Y$ be $V$-schemes, and let $Z' \subset Z \subset X$ be closed subschemes. The cohomology theory $\E$ has the following properties:

\begin{enumerate}
    \item[(0)] \textit{Functoriality}. $\E_Z(X)$ is a contravariant functor with respect to morphisms of pairs.
    \item \textit{Localization}. There is a functorial exact sequence:  
    \[\dotsb \rightarrow \E_Z(X) \rightarrow \E_{Z - Z'}(X - Z') \xrightarrow{\partial} \E_{Z'}(X) \rightarrow \E_Z(X) \rightarrow \dotsb\]    
    \item \textit{\'Etale excision}. For any \'etale neighborhood $U$ of $Z \subset X$, the following map is an isomorphism:  
    \[\E_Z(X) \xrightarrow{\simeq} \E_Z(U)\]
    \item \textit{Homotopy invariance}. Shift by a $V$-rational vector induce the identity on $\E(\AffSp^n_V \times_V X)$.
    \item \textit{Purity}. If $X$ and $Z$ are regular and $Z$ is of pure codimension $c$ in $X$, then there is a natural Gysin isomorphism:  
    \[\operatorname{gys}\colon \E_{Z'}(Z) \xrightarrow{\simeq} \E_{Z'}(X)\]
    \item \textit{0-dimensional case}. If $X$ is a semi-local principal domain, $Z$ is a set of closed points, and $\eta$ is the generic point, then the localization sequence takes the form:  
    \[0 \rightarrow \E(X) \rightarrow \E(\eta) \xrightarrow{\partial} \E_Z(X) \rightarrow 0\]
    \item \textit{Trace structure}. For any finite flat morphism $\pi\colon Y \to X$ of degree $d$, there is a trace homomorphism $\tr_{\pi}\colon \E_{\pi^{-1}(Z)}(Y) \to \E_Z(X)$ such that the composition $\tr_{\pi} \circ \pi^*$ is multiplication by $d$.
    \item If $p>0$, $\E_Z(X)$ is uniquely $p$-divisible.
    \item \textit{Continuity}. Let $X^\alpha$ be a filtered inductive system of affine $V$-schemes, and let $X = \varprojlim X^\alpha$. Then the canonical homomorphism $\varinjlim \E(X^\alpha) \to \E(X)$ is an isomorphism.
\end{enumerate}

Properties 0-2 are standard. Property 3 is obvious, since any $V$-isomorphism induces the identity on coefficients. Property 4 with $Z' = Z$ is standard for $X, Z$ smooth over $V$, and in general is a consequence of Gabber's absolute purity theorem \cite{absolutePurity}. Property 4 for arbitrary $Z'$ is discussed in Section~\ref{PaninsMethod}. Property 5 is proved in \cite[Appendix B]{colliot1997bloch}. See \cite[\S~11]{panin2022movinglemmasmixedcharacteristic} for the construction of $\tr_{\pi}$. Property 7 is obvious, since multiplication by $p$ induces an isomorphism on prime-to-$p$ torsion sheaves. Property 8 is also standard; see \cite[Theorem II.6.3.2]{tamme2012introduction}.

Our proof uses only these properties, so the result holds for any cohomology theory that satisfies these conditions. Moreover, for the ``geometric'' case, we do not need absolute purity: purity for $X, Z$ smooth over $V$ is sufficient. Properties 6 and 7 are used only in the reduction to the case of the perfect infinite residue field.

Recall that for any equidimensional and Noetherian scheme $X$, the following Cousin complex is constructed in \cite{colliot1997bloch} using properties (0-2):
\[0 \rightarrow \bigoplus_{x \in X^{(0)}} \E_x(X) \rightarrow \bigoplus_{x \in X^{(1)}} \E_x(X) \rightarrow \dots \text{ , where } \E_x(X) := \varinjlim \E_{\overline{\{x\}}\cap U}(U)\]
There is an isomorphism 
\[\bigoplus_{x \in X^{(c)}} \E_x(X) \cong \varinjlim \E_{Z^c - Z^{c+1}}(X - Z^{c+1}),\]
where the limit is taken over pairs $Z^c \subset Z^{c+1}$ with $Z^i \subset X$ being a closed subset of codimension at least $i$. The boundary maps in the Cousin complex then arise as a limit of the boundary maps in the localization sequence of the triple $(X-Z^{c}, X-Z^{c+1}, X-Z^{c+2})$.It can easily be shown, using functoriality of the boundary map, that if the support extension maps $\operatorname{ext} \colon \E_{\Phi^{c+1}}(U) \rightarrow \E_{\Phi^{c}}(U)$ vanish, then the Cousin complex has only non-zero homology in the term $\bigoplus_{x \in X^{(0)}} \E_x(X)$, which equals $\E(X)$.

The proof of Theorem~\ref{bo-th} is structured as follows. We start with the ``geometric case'' (Theorem~\ref{geometricCase}), i.e., with a local scheme that is the localization of a $V$-smooth scheme $X$ at a point. In Section~\ref{reduction}, we reduce to the case of a perfect infinite residue field $k(v)$. In Section~\ref{geometricCaseSection}, the ``geometric case'' is proved by induction on the relative dimension over $V$. We use a version of Lindel--Ojanguren--Gabber's geometric presentation lemma over DVR, Theorem~\ref{geomPres}, proved by Panin and Stavrova \cite{panin2024constantcasegrothendieckserreconjecture}, reducing to the case of an open subscheme of $\AffSp^n_V$. In fact, we need a semi-local version of this lemma, so we provide the proof in Appendix A. Subsequently, we apply a cohomology class modification trick (Lemma~\ref{classMod}). For the convenience of the proof, Lemmas~\ref{injectivity} and~\ref{zeroMaps} are stated below. The general case is derived from the geometric case using Panin's method \cite{PanEqui}. The proof is essentially the same as Panin's original approach, but algebraic $K$-theory is replaced with the \'etale cohomology theory $\E$. In Section~\ref{PaninsMethod}, we outline the proof.

\begin{lemma}\label{injectivity} 
    Let $X$ be a smooth affine irreducible $V$-scheme. Let $x \subset X_v$ be a finite set of closed points, and let $U = \spec(\mathcal{O}_{X, x})$. Let $\eta$ denote the generic point of $U$. Then the map $\E(U) \rightarrow \E(\eta)$ is injective.
\end{lemma}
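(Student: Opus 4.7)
The plan is, first, to reduce by continuity and localization to showing that the forget-support map $\E_Z(U) \to \E(U)$ vanishes for every proper closed $Z \subsetneq U$; next, to transport this, via the semi-local geometric presentation of Appendix A, to the corresponding forget map on the relative affine line over a semi-local base; and finally to kill it by a support argument after pulling back along a $V$-rational section disjoint from the transported closed subset.

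By continuity (property 8), one has $\E(\eta) = \varinjlim_{Z \subsetneq U} \E(U \setminus Z)$, and the localization sequence (property 1) identifies the kernel of $\E(U) \to \E(U \setminus Z)$ with the image of $\E_Z(U) \to \E(U)$. Hence $\ker(\E(U) \to \E(\eta))$ is the union of these images as $Z$ varies, and the lemma reduces to showing each such forget map vanishes; by enlarging $Z$ to a hypersurface containing it, I may further assume $Z$ is of pure codimension one. Applying the semi-local Gabber--Lindel--Ojanguren-style presentation (Theorem~\ref{geomPres}) to $(X, x, Z)$ then yields, after shrinking $X$, an \'etale morphism $\phi \colon U \to \AffSp^1_{S'}$ to the relative affine line over some semi-local $V$-scheme $S'$ that is an \'etale neighborhood of $\bar{Z} := \phi(Z)$, with $\bar{Z} \to S'$ finite. \'Etale excision (property 2) turns $\phi^*$ into an isomorphism $\E_{\bar{Z}}(\AffSp^1_{S'}) \xrightarrow{\cong} \E_Z(U)$ and, by functoriality, factors the forget map $\E_Z(U) \to \E(U)$ as $\E_{\bar{Z}}(\AffSp^1_{S'}) \to \E(\AffSp^1_{S'}) \xrightarrow{\phi^*} \E(U)$; it therefore suffices to show that the forget map $\E_{\bar{Z}}(\AffSp^1_{S'}) \to \E(\AffSp^1_{S'})$ vanishes.

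To do so, I seek a $V$-rational section $s_{t_0} \colon S' \to \AffSp^1_{S'}$, $t_0 \in R$, disjoint from $\bar{Z}$. Since $\bar{Z}$ is finite over the semi-local base $S'$, it cuts each closed fiber $\AffSp^1_{k(s'_i)}$ in finitely many points, imposing only finitely many forbidden residue classes on $t_0$; once the residue field $k = k(v)$ is infinite---which is the situation after the reduction of Section~\ref{reduction}, driven by properties 6 and 7---such $t_0 \in R$ exists. For this section, functoriality of supports makes $s_{t_0}^* \circ (\text{forget})$ factor through $\E_{\emptyset}(S') = 0$, hence vanishes; by property 3, $s_{t_0}^* = s_0^*$ as maps $\E(\AffSp^1_{S'}) \to \E(S')$, and the latter is an isomorphism by (strong) homotopy invariance of \'etale cohomology with prime-to-$p$ coefficients on $V$-smooth schemes, so the forget map itself must vanish.

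The principal obstacle is the last step: producing a $V$-rational section $s_{t_0}$ avoiding $\bar{Z}$ really does require $k$ infinite, and this is the only place where the reduction of Section~\ref{reduction} genuinely feeds into the injectivity argument. A secondary technical point is bridging the gap between the bare shift-invariance formulated in property 3 and the stronger homotopy invariance needed to deduce ``forget $= 0$'' from ``$s_0^* \circ \text{forget} = 0$'', which is standard for the \'etale cohomology theory at hand.
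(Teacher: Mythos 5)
Your first reduction is fine: the kernel of $\E(U)\to\E(\eta)$ is the union over proper closed $Z\subset U$ of the images of the forget-support maps $\E_Z(U)\to\E(U)$, so it suffices to kill these. The argument breaks at the next step. Theorem~\ref{geomPres} does \emph{not} produce an \'etale morphism $U\to\AffSp^1_{S'}$ with $\phi(Z)$ finite over a semi-local base $S'$ of relative dimension $n-1$; it produces an \'etale neighbourhood structure over a localization $\mathcal A$ of $\AffSp^n_V$ itself (a ``constant case'' presentation). The fibration-type presentation you invoke --- the one used in Gabber's proof over a field --- is precisely what is \emph{not} available over a DVR for an arbitrary closed $Z$ of codimension one: a ``horizontal'' $Z$ can have closure at infinity containing the whole special fibre of the hyperplane at infinity (already for $n=1$, $Z=\{\pi x=1\}\subset\AffSp^1_V$), so no linear projection makes $Z$ finite over $\AffSp^{n-1}_V$, and no $V$-rational section avoids it fibrewise. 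Worse, your class of supports includes $Z\supset U_v$ (the special fibre is itself a hypersurface), for which the section-avoidance step cannot possibly work since every section meets $Z$ in the closed fibre; the paper deals with such supports separately, via the known one-dimensional semi-local case applied to the localization of $U$ at the generic points of $U_v$ (see the proof of Lemma~\ref{reductionToAff}).

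What is missing is the central mechanism of the paper: an induction on the relative dimension combined with the cohomology-class modification of Lemma~\ref{classMod}. One first transports the problem to $\AffSp^n_V$ via Theorem~\ref{geomPres}, then uses the inductive hypothesis (injectivity in relative dimension $<n$, through Lemma~\ref{classExtension}) to replace the locus $Y$ where the class is undefined by a new locus $\widetilde Y$ with $\codim(\widetilde Y_v)\geqslant 2$ and $\codim(\widetilde Y_\theta)\geqslant 2$. Only for such fibrewise-codimension-$\geqslant 2$ loci does a generic linear projection $\AffSp^n_V\to\AffSp^{n-1}_V$ exist that is finite on $\widetilde Y$ and admits the avoidance properties (Lemma~\ref{cylinder}); the shift along the $\AffSp^1$-factor then moves the support off $x$ using only the shift-invariance of axiom~(3), with no need for the strong $\AffSp^1$-homotopy invariance $\E(S')\cong\E(\AffSp^1_{S'})$ your last step relies on (which, while true for \'etale cohomology, is deliberately not among the axioms the paper allows itself). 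Your observation that an infinite residue field is needed to find rational points is correct, but it is not the principal obstacle; without the induction and the class modification the argument does not get off the ground.
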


\begin{lemma}\label{zeroMaps}
    Let $U$ be as in Lemma~\ref{injectivity}. Then, for any $c \geqslant 0$, the following support extension map is zero:
    \[\operatorname{ext} \colon \E_{\Phi^{c+1}}(U) \to \E_{\Phi^{c}}(U).\]
\end{lemma}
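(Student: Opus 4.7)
The plan is to reduce to a standard relative affine-line situation via the geometric presentation lemma (Theorem~\ref{geomPres}), and then combine the class modification lemma (Lemma~\ref{classMod}) with homotopy invariance (Property~3) to move the support of the class off the closed points of $U$.

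Let $\alpha \in \E_Z(U)$ with $\codim_U Z \geq c+1$. It suffices to find a closed subset $Z'' \supset Z$ with $\codim_U Z'' \geq c$ such that $\alpha \mapsto 0$ in $\E_{Z''}(U)$; this will witness the vanishing of the extension map on the colimits. By continuity (Property~8), spread $\alpha$ to a class $\tilde\alpha \in \E_{\tilde Z}(X')$ on some affine open neighborhood $X' \subset X$ of $x$ with $\tilde Z \cap U = Z$. Apply the semi-local version of Theorem~\ref{geomPres} (proved in Appendix~A) to $(X', \tilde Z)$ at $x$: after possibly shrinking $X'$, this yields a $V$-morphism $X' \to \AffSp^1_{S_0}$ étale at $x$, where $S_0$ is a semi-local open in $\AffSp^{n-1}_V$ at the image of $x$, under which $\tilde Z$ maps finitely onto its image $T \subset S_0$ and is locally cut out by a monic polynomial $f$ in the affine-line coordinate $t$. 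By étale excision (Property~2), transport $\tilde\alpha$ to a class $\bar\alpha \in \E_{\bar Z}(W)$ on an open $W \subset \AffSp^1_{S_0}$ containing the image of $x$.

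Now apply the class modification lemma (Lemma~\ref{classMod}) to replace $\bar\alpha$ by a cohomologous class supported on $V(f)$. By homotopy invariance (Property~3), for any $V$-rational shift $a$ the translation $\sigma_a \colon t \mapsto t+a$ acts as the identity on the ambient cohomology $\E(\AffSp^1_{S_0})$. For a generic choice of $a$, the shifted support $\sigma_a(V(f))$ is disjoint from the closed points of $U$ inside $\AffSp^1_{S_0}$. Combining the identity $\sigma_a^* \bar\alpha = \bar\alpha$ with the localization sequence (Property~1) for the union $V(f) \cup \sigma_a^{-1}V(f)$, and then restricting to $U$, one finds that $\bar\alpha$ vanishes in $\E_{Z''}(U)$ for $Z'' := Z \cup (p^{-1}(T) \cap U)$, where $p \colon \AffSp^1_{S_0} \to S_0$ is the projection. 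The finiteness of $\bar Z \to T$ together with the codimension estimate $\codim_{\AffSp^1_{S_0}} \bar Z \geq c+1$ forces $\codim_{S_0} T \geq c$, and hence $\codim_U Z'' \geq c$, as required.

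The main obstacle will be the last step: linking the class modification precisely with the translation $\sigma_a$ so that the localization sequence produces vanishing in the enlarged support $Z''$ rather than merely a boundary class. This is the mixed-characteristic analogue of the classical Bloch--Ogus moving argument, executed entirely within the axiomatic framework of Properties~0--8. A secondary technical point is that the semi-local nature of $U$ (a finite set of closed points $x$ in the special fiber) forces the use of the semi-local presentation lemma from Appendix~A and a simultaneous generic choice of the translation parameter $a$ moving the shifted support off every point of $x$ at once; both are routine genericity arguments once the geometric setup is fixed.
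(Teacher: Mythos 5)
Your overall strategy --- present $U$ \'etale over a relative affine line, observe that the support $Z$ is finite over the base so that passing to $\operatorname{pr}^{-1}(\overline{\operatorname{pr}(Z)})$ loses only one codimension, then use a translation along $\AffSp^1$ plus homotopy invariance to move the support off $x$ --- is the paper's route (Lemma~\ref{cylinder} and Corollary~\ref{zeroMapCor}). But the step you yourself flag as ``the main obstacle'' is precisely the content of the proof, and you leave it open. The paper's resolution is cleaner than the one you sketch: extend supports from $Z$ to $Z' = \operatorname{pr}^{-1}(\overline{\operatorname{pr}(Z)})$, note that $Z'$ is \emph{invariant} under any fiberwise translation $\mathrm{T}$, so homotopy invariance identifies the support extension $\E_Z \to \E_{Z'}$ with $\E_{\mathrm{T}(Z)} \to \E_{Z'}$; choosing $\mathrm{T}$ with $\mathrm{T}(Z) \cap x = \varnothing$ (possible by condition (4) of Lemma~\ref{cylinder}) and restricting to $W = \AffSp^1_V \times_V U - \mathrm{T}(Z)$, the group $\E_{\mathrm{T}(Z) \cap W}(W)$ vanishes, so the extended class dies in a neighborhood of $x$. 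No localization sequence for $V(f) \cup \sigma_a^{-1}V(f)$ is needed; functoriality of the support extension does the work.

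There are two further gaps. First, Lemma~\ref{classMod} is the wrong tool here: it modifies classes in $\E(\AffSp^n_V - Y)$ \emph{without} supports and is the engine of Lemma~\ref{injectivity}, not of this lemma; it does not produce ``a cohomologous class supported on $V(f)$.'' What you actually need before translating is that the open set carrying your class is a \emph{full} $\AffSp^1$-bundle over an open base --- otherwise $\sigma_a$ does not act on it and homotopy invariance cannot be invoked. The paper obtains this via the excision $\E_Z(\AffSp^n_V - Y) \cong \E_Z(\AffSp^n_V - (Y \cap Z))$, shrinking the removed set to $Y \cap Z$, which has codimension $\geqslant c+2 \geqslant 3$ and hence fiberwise codimension $\geqslant 2$; only then does Lemma~\ref{cylinder} give a projection finite on $Y \cap Z$ with $\operatorname{pr}(x) \notin \operatorname{pr}(Y \cap Z)$, so the class lives on the whole line over $\AffSp^{n-1}_V - \operatorname{pr}(Y \cap Z)$. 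Your proposal never controls the complement of the open set $W \subset \AffSp^1_{S_0}$. Second, for $c = 0$ the support $Z$ may contain the closed fiber $\AffSp^n_v$, in which case no translation moves it off the closed points $x \subset X_v$ and your ``generic choice of $a$'' fails; the paper treats $c = 0$ separately, reducing it to Lemma~\ref{injectivity} and the one-dimensional semi-local case at the generic points of $U_v$. Finally, the induction structure (with Lemma~\ref{injectivity} in lower relative dimension as input) is absent from your write-up.
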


\section{Reduction to the case of the perfect infinite residue field}\label{reduction}


\begin{lemma}\label{TrSupp}
    A trace structure exists on $\E_{\Phi^{c}}(-)$. Specifically, for any finite flat morphism $\pi\colon X \to Y$ of degree $d$, there is a trace homomorphism $\tr_{\pi}\colon \E_{\Phi^{c}}(X) \to \E_{\Phi^{c}}(Y)$ such that the composition $\tr_{\pi} \circ \pi^*\colon \E_{\Phi^{c}}(Y) \to \E_{\Phi^{c}}(Y)$ is multiplication by $d$.
\end{lemma}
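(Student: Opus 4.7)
I would construct $\tr_\pi$ on $\E_{\Phi^c}(-)$ by passing the trace of Property~6 through the colimits
\[
\E_{\Phi^c}(X) = \varinjlim_{Z' \in \Phi_X^c} \E_{Z'}(X), \qquad \E_{\Phi^c}(Y) = \varinjlim_{Z \in \Phi_Y^c} \E_{Z}(Y).
\]

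The first step is to verify that $\pi$ preserves codimensions. Because $\pi$ is finite flat, the local rings satisfy $\dim \mathcal{O}_{X,x} = \dim \mathcal{O}_{Y,\pi(x)}$ for every $x \in X$, whence $\codim_X \pi^{-1}(Z) = \codim_Y Z$ for every closed $Z \subset Y$. The second step is to check that the family $\{\pi^{-1}(Z) : Z \in \Phi_Y^c\}$ is cofinal in $\Phi_X^c$: given $Z' \in \Phi_X^c$, set $Z := \pi(Z')$, which is closed because $\pi$ is a closed map and satisfies $Z' \subset \pi^{-1}(Z)$. Since $\pi$ is finite flat of positive degree it is surjective, so $\dim X = \dim Y$ and $\dim Z = \dim Z'$, giving $\codim_Y Z = \codim_X Z' \geq c$, hence $Z \in \Phi_Y^c$.

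With these ingredients in place, I would define $\tr_\pi \colon \E_{\Phi^c}(X) \to \E_{\Phi^c}(Y)$ as the colimit over $Z \in \Phi_Y^c$ of the traces $\E_{\pi^{-1}(Z)}(X) \xrightarrow{\tr_\pi} \E_Z(Y)$ provided by Property~6, using the cofinality above to see that this realizes a map out of $\E_{\Phi^c}(X)$. For the colimit to assemble into a well-defined homomorphism, one needs the trace to commute with support-enlargement, i.e.\ for $Z_1 \subset Z_2$ in $\Phi_Y^c$ the square
\[
\begin{tikzcd}
\E_{\pi^{-1}(Z_1)}(X) \ar[r] \ar[d, "\tr_\pi"'] & \E_{\pi^{-1}(Z_2)}(X) \ar[d, "\tr_\pi"] \\
\E_{Z_1}(Y) \ar[r] & \E_{Z_2}(Y)
\end{tikzcd}
\]
must commute; this is part of the functoriality built into the construction of $\tr_\pi$ in \cite[\S 11]{panin2022movinglemmasmixedcharacteristic}. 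The identity $\tr_\pi \circ \pi^* = d \cdot \mathrm{id}$ then descends to the colimit immediately from the corresponding identity at each support level. The main point requiring care is the codimension bookkeeping together with the naturality of $\tr_\pi$ in the support variable; neither is deep, but both must be stated precisely before the colimit makes sense.
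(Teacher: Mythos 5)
Your proposal is correct and takes essentially the same route as the paper: both define $\tr_\pi$ on $\E_{\Phi^c}$ by passing the trace of Property~6 through the colimit over supports, using that the preimages $\pi^{-1}(Z)$ are cofinal in $\Phi_X^c$ (the paper simply cites \cite[V.2.10]{AK} for this, where you supply the argument). The only minor imprecision is your use of $\codim_Y Z = \dim Y - \dim Z$, which needs the schemes to be catenary and equidimensional (true in the paper's setting); a cleaner version notes that generic points of $\pi(Z')$ are images of generic points of $Z'$ and that $\dim \mathcal{O}_{Y,\pi(w)} = \dim \mathcal{O}_{X,w}$ for finite flat $\pi$.
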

\begin{proof}
    Since $\pi$ is finite and flat, for any closed subset $Z \subset Y$ of codimension at least $c$, its preimage $\pi^{-1}(Z) \subset X$ is also a closed subset of codimension at least $c$. Additionally, closed subsets of the form $\pi^{-1}(Z)$ are final in $\Phi^c$; see \cite[V.2.10]{AK}. Therefore, we have $\E_{\Phi^c}(X) = \varinjlim \E_{\pi^{-1}(Z)}(X)$. The trace structure on the coefficients naturally induces a homomorphism $\E_{\pi^{-1}(Z)}(X) \to \E_Z(Y)$. We then define $\tr_{\pi}$ as the limit of these homomorphisms.
\end{proof}

\begin{lemma}\label{dvrExt}
    Let $k'$ be a finite extension of the residue field $k$. Then there exists a DVR $R'$, finite and flat over $R$, with residue field $k'$ and uniformizer $\pi$.
\end{lemma}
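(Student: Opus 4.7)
The plan is to construct $R'$ by lifting a generator of $k'$ over $k$, following the classical construction of an unramified extension of a DVR. First I would reduce to the case $k' = k(\alpha)$ of a simple extension: writing $k' = k(\alpha_1,\dots,\alpha_n)$ and setting $k_i := k(\alpha_1,\dots,\alpha_i)$, I iterate the simple case along the tower $k_0 \subset k_1 \subset \cdots \subset k_n$, composing the resulting DVRs. Finite flatness and the property that $\pi$ remains a uniformizer are both preserved under composition, so it suffices to handle the simple case.

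For $k' = k(\alpha)$, let $f(t) \in k[t]$ be the minimal polynomial of $\alpha$, lift $f$ to a monic $F(t) \in R[t]$ of the same degree, and set $R' := R[t]/(F(t))$. Then $R'$ is $R$-free of rank $\deg F$, hence finite flat over $R$, and $R'/\pi R' \cong k[t]/(f(t)) = k'$ is a field. Since $R'$ is a finite $R$-algebra, its maximal ideals lie over $\pi R$ and correspond to the unique maximal ideal of $R'/\pi R' = k'$; thus $R'$ is local with maximal ideal $\pi R'$.

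It remains to check that $R'$ is a domain, i.e.\ that $F$ is irreducible in $R[t]$. Any factorization of the monic $F$ can be taken with both factors monic after rescaling by units; reducing modulo $\pi$ and invoking irreducibility of $\bar F = f$ in $k[t]$ forces one factor to equal $1$. Hence $R'$ is a Noetherian local domain with principal nonzero maximal ideal $\pi R'$, and therefore a DVR with uniformizer $\pi$ and residue field $k'$. The main point worth emphasizing --- and the only place where a careless setup could go wrong --- is that separability of $k'/k$ is not used anywhere in the argument; this matters because the reduction to a perfect residue field in Section~\ref{reduction} can produce inseparable residue extensions.
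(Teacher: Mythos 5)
Your proof is correct and follows the same construction as the paper: lift a monic irreducible polynomial to a monic $F \in R[t]$ and take $R' = R[t]/(F)$. You are somewhat more careful than the paper --- you reduce a general finite extension to a tower of simple ones (the paper's proof implicitly assumes $k'/k$ is simple, which is all that is needed in its application in Lemma~\ref{reductionToInfinite}), and you verify explicitly that $F$ is irreducible in the UFD $R[t]$, so that $R'$ is a domain; both additions are welcome and your closing remark about separability being irrelevant is accurate.
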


\begin{proof}
    Let $k' = k[t]/(f)$ for some monic irreducible polynomial $f \in k[t]$. Choose a monic lift $F \in R[t]$ of $f$, and define $R' = R[t]/(F)$. Since $F$ is monic, $R'$ is finite and flat over $R$. We see that $R'/\pi R' \cong k'$, so $\pi R'$ is the unique maximal ideal of $R'$. Thus $R'$ is a DVR.
\end{proof}

\begin{lemma}\label{limitPhi}
    Let $\{A^\alpha\}$ be a filtered inductive system of Noetherian $R$-algebras with injective transition maps and Noetherian colimit $A$. Let $X^\alpha = \spec A^\alpha$, $X = \varprojlim X^\alpha$, and $A = \varinjlim A^\alpha$. Then $X = \spec A$, and there are canonical isomorphisms:
    \[\varinjlim \E(X^\alpha - \Phi^c) \to \E(X - \Phi^c) \quad \text{and} \quad \varinjlim \E_{\Phi^c}(X^\alpha) \to \E_{\Phi^c}(\varprojlim X^\alpha).\]
\end{lemma}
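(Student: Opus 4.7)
The identification $X = \spec A$ is standard: $\spec$ converts filtered colimits of rings into cofiltered limits of affine schemes. For the two displayed isomorphisms, I reduce each to the continuity axiom (Property~8 of Section~\ref{preliminaries}) via a cofinality argument on the inductive systems of supports.

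Beginning with a single support: fix a closed $Z \subset X$ of codimension $\geq c$. Since $A$ is Noetherian, the defining ideal of $Z$ is generated by finitely many elements $f_1, \dots, f_n \in A$, which I may assume come from some $A^{\alpha_0}$. Setting $Z^\beta := V(f_1, \dots, f_n) \subset X^\beta$ for $\beta \geq \alpha_0$ gives an inverse system with $Z = \varprojlim_\beta Z^\beta$ and $X - Z = \varprojlim_\beta (X^\beta - Z^\beta)$. Property~8 applied to both limits yields continuity for $\E(X)$ and $\E(X - Z)$; combining these with the five lemma on the localization long exact sequences of $(X^\beta, Z^\beta)$ produces isomorphisms
\[\varinjlim_{\beta \geq \alpha_0} \E(X^\beta - Z^\beta) \xrightarrow{\sim} \E(X - Z) \quad \text{and} \quad \varinjlim_{\beta \geq \alpha_0} \E_{Z^\beta}(X^\beta) \xrightarrow{\sim} \E_Z(X).\]

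To promote these to the statements of the lemma, I need a cofinality result: the family $\{(\alpha, Z^\alpha) : Z^\alpha \subset X^\alpha \text{ closed of codim} \geq c\}$, under the pullback $(\alpha, Z^\alpha) \mapsto Z^\alpha_X \subset X$, should be cofinally equivalent to $\{Z \subset X \text{ closed of codim} \geq c\}$. This amounts to two claims: (a) for every such $Z \subset X$, there exist $\beta$ and $Z^\beta \subset X^\beta$ of codim $\geq c$ with $Z^\beta_X \supset Z$; and (b) for every $Z^\alpha \subset X^\alpha$ of codim $\geq c$, the pullback $Z^\alpha_X$ has codim $\geq c$ in $X$. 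For (a), I take the lift $Z^\beta$ of $Z$ constructed from generators as above, and must show the codim-$<c$ components of $Z^\beta$ disappear for $\beta$ large: these components form an inverse system of non-empty closed subsets of spectral spaces, so a compactness argument produces a non-empty inverse limit contained in $Z$, which must then have codim $<c$ — contradicting $\codim Z \geq c$. For (b), I compare heights of primes of $A$ contracting to height-$\geq c$ primes of $A^\alpha$, using Noetherianity and the injectivity of the transitions $A^\alpha \hookrightarrow A^\beta$.

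\textbf{Main obstacle.} The cofinality argument, particularly the bookkeeping of codimensions under pullback, is the technical heart of the proof. Codimension is not in general preserved under pullback of non-flat morphisms, so the argument must genuinely use the structure of the inverse system — the Noetherianity of $A$, the injectivity of the transitions, and the inverse-limit topology on $X = \varprojlim X^\alpha$. Once the cofinality is established, the remaining steps — invoking Property~8 and the five lemma — are formal.
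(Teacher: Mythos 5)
Your overall route --- spread each support out to a finite stage using that $A$ is Noetherian, invoke continuity for a fixed support, and conclude by cofinality plus the five lemma --- is the same as the paper's. The gap is in how you certify that the spread-out support $Z^\beta$ has codimension $\geqslant c$, which is the one step requiring an actual idea. Your compactness argument does not work. First, the codimension-$<c$ components of the $Z^\beta$ need not form an inverse system: without flatness, an irreducible component of $Z^{\beta'}$ of small codimension may map into a component of $Z^\beta$ of large codimension, so there is no induced map $C^{\beta'}\to C^\beta$ between the unions of small-codimension components. Second, even granting a compatible system of such components $W^\beta$, there is no reason its non-empty inverse limit has codimension $<c$ in $X$: codimension does not pass to inverse limits (the generic points of the $W^\beta$ need not lift to $X$), and indeed the limit sits inside $Z$ and therefore has codimension $\geqslant c$ --- the ``contradiction'' you reach is precisely the unproven claim. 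The paper's argument is simpler and avoids this entirely: choose a chain $Z = Z_0 \subsetneq Z_1 \subsetneq \dots \subsetneq Z_c$ of irreducible closed subsets of $X$ witnessing $\codim Z \geqslant c$; each $Z_i$ is cut out by finitely many elements of $A$, so the whole chain descends to some $X^\beta$; the descended inclusions remain strict because their preimages in $X$ recover the strictly nested $Z_i$; hence $Z^\beta$ lies at the bottom of a length-$c$ chain and has codimension $\geqslant c$, with no contradiction argument needed.

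Your claim (b) is also not obtainable from ``Noetherianity and injectivity of the transitions'' alone: for an injective map of Noetherian domains such as $k[x,y]\hookrightarrow k[x,y/x]$, the ideal $(x,y)$ of height $2$ extends to the ideal $(x)$ of height $1$, so pullback of closed subsets along a dominant map of Noetherian affine schemes can strictly decrease codimension. You are right that some statement of this kind is needed even to define the transition maps in $\varinjlim_\alpha \E(X^\alpha - \Phi^c)$ (the paper leaves this point implicit); but it must come from the geometry of the particular system rather than from generalities --- in the application (Lemma~\ref{reductionToInfinite}) the transition maps $X^{\alpha'} \to X^{\alpha}$ and $X \to X^{\alpha}$ are finite and flat, so preimages do preserve codimension and the cofinality is genuine. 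As written, your proposal stalls at both (a) and (b); replacing (a) by the chain-descent argument and justifying (b) by the finite-flatness available in the intended application would repair it.
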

\begin{proof}
    Let $Z$ be a closed irreducible subscheme of $X$, and let $I$ be the corresponding ideal of $A$. Since $A$ is Noetherian, the ideal $I$ is finitely generated. Therefore, the generators of $I$ lie in some $A^\beta$ and generate an ideal $I^\beta \subset A^\beta$ such that $I^\beta A = I$. Let $Z^\beta \subset X^\beta$ be the corresponding closed subscheme. Then $Z$ is the preimage of $Z^\beta$ under the projection map $X \to X^\beta$. Since $Z$ is irreducible, $Z^\beta$ is also irreducible.

    Now, suppose $Z \subset X$ is a closed irreducible subscheme of codimension at least $c$. Then there exists a chain $Z = Z_0 \subsetneq Z_1 \subsetneq \dots \subsetneq Z_c \subset X$, where each $Z_i$ is a closed irreducible subset of $X$. For each $Z_i$, choose $\beta_i$ as constructed above. Fix $\beta$ such that $\beta > \beta_i$ for all $i$. This yields a chain $Z^\beta = Z_0^\beta \subsetneq Z_1^\beta \subsetneq \dots \subsetneq Z_c^\beta \subset X^\beta$, where $Z^\beta$ has codimension at least $c$. Similarly, for any closed subset $Z \subset X$ of codimension at least $c$, we construct $\beta = \beta(Z)$ such that $Z^\beta \subset X^\beta$ is a closed subscheme of codimension $c$ and $Z$ is the preimage of $Z^\beta$.

    Finally, we have
    \[\E((\varprojlim X^\alpha)-\Phi^c) = \varinjlim_{Z\in \Phi^c} \E((\varprojlim X^\alpha) - Z) = \varinjlim_{Z\in \Phi^c}\; \varinjlim_{\alpha > \beta(Z)} \E(X^\alpha-Z^\alpha)\]\
    \[\varinjlim \E(X^\alpha-\Phi^c) = \varinjlim_\alpha \; \varinjlim_{Z^\alpha\in \Phi^c} \E(X^\alpha - Z^\alpha)\]
    It is clear that these limits are isomorphic via the canonical homomorphism. This proves the first isomorphism. The second isomorphism follows immediately from the first isomorphism and the five-lemma.
\end{proof}

\begin{lemma}\label{reductionToInfinite}
    Assume that Lemma~\ref{zeroMaps} holds for any base scheme $V$ where the residue field $k$ is perfect and infinite. Then Lemma~\ref{zeroMaps} also holds for any base scheme $V$ where $k$ is finite or imperfect.
\end{lemma}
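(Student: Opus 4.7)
The plan is to descend from the perfect and infinite residue field case by a standard base-change/trace argument along a tower of finite flat DVR extensions, exploiting Lemma~\ref{TrSupp}, Lemma~\ref{dvrExt}, and Lemma~\ref{limitPhi}.

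First, I construct a filtered system of DVR extensions $\{R^\alpha\}$ of $R$, each finite flat with the same uniformizer $\pi$, whose colimit $R^{\mathrm{big}} := \varinjlim R^\alpha$ is a DVR with perfect and infinite residue field $k^{\mathrm{big}}$, and arranged so that every finite degree $d_\alpha := [R^\alpha : R]$ acts invertibly on $\Lambda$. The two defects of the residue field $k$ are mutually exclusive, so I treat them separately. If $k$ is imperfect, then necessarily $\operatorname{char} k = p > 0$ and $k$ is automatically infinite; in this case I adjoin $p$-th roots: for $t \in k \setminus k^p$ with a lift $\tilde t \in R$, the ring $R[X]/(X^p - \tilde t)$ is a DVR finite free of degree $p$ over $R$ with the same uniformizer $\pi$ and residue field $k(t^{1/p})$. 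Iterating over a $p$-basis of $k$ exhausts the perfect closure, and all intermediate degrees are powers of $p$, hence invertible on the prime-to-$p$ torsion sheaf $\Lambda$. If instead $k$ is finite (hence perfect), I pick a prime $\ell$ coprime to $rp$ and, using Lemma~\ref{dvrExt}, build the tower whose residue fields are $\mathbb{F}_{|k|^{\ell^n}}$; the colimit residue field is infinite and algebraic over a perfect field, hence perfect, and each intermediate degree is a power of $\ell$, invertible on $\Lambda$.

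Second, I base change $U = \spec \mathcal{O}_{X,x}$ along each $R \to R^\alpha$: set $X^\alpha := X \times_V \spec R^\alpha$, let $x^\alpha \subset X^\alpha_{v^\alpha}$ denote the finite preimage of $x$, and put $U^\alpha := \spec \mathcal{O}_{X^\alpha, x^\alpha}$, a semi-local scheme. The scheme $X^\alpha$ is smooth affine over $V^\alpha := \spec R^\alpha$; if it loses irreducibility under base change, it splits as a finite disjoint union of its connected components (which coincide with its irreducible components since $X^\alpha$ is regular), and the argument is applied component by component. Denote the corresponding colimit objects by $V^{\mathrm{big}}$, $X^{\mathrm{big}}$, $U^{\mathrm{big}}$.

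Third, since $V^{\mathrm{big}}$ has perfect infinite residue field by construction, the standing hypothesis implies that $\operatorname{ext}\colon \E_{\Phi^{c+1}}(U^{\mathrm{big}}) \to \E_{\Phi^c}(U^{\mathrm{big}})$ vanishes. By Lemma~\ref{limitPhi}, both sides are filtered colimits of their analogues at the finite stages, so for any fixed class $\xi \in \E_{\Phi^{c+1}}(U)$ there exists a sufficiently large index $\alpha$ such that $\operatorname{ext}(\pi_\alpha^* \xi) = 0$ in $\E_{\Phi^c}(U^\alpha)$, where $\pi_\alpha \colon U^\alpha \to U$ is the projection. Since $\pi_\alpha$ is finite flat of degree $d_\alpha$, Lemma~\ref{TrSupp} supplies $\tr_{\pi_\alpha} \colon \E_{\Phi^c}(U^\alpha) \to \E_{\Phi^c}(U)$ with $\tr_{\pi_\alpha} \circ \pi_\alpha^* = d_\alpha$; combining this with naturality of the trace in supports (so that it commutes with the support extension map) I obtain
\[ d_\alpha \cdot \operatorname{ext}(\xi) = \tr_{\pi_\alpha}\bigl(\operatorname{ext}(\pi_\alpha^* \xi)\bigr) = 0. \]
As $d_\alpha$ acts invertibly on the $r$-torsion group $\E_{\Phi^c}(U)$ by the choice of tower, I conclude $\operatorname{ext}(\xi) = 0$, as desired.

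The main obstacle is the first step: arranging the tower so that every intermediate degree is simultaneously invertible on $\Lambda$ \emph{and} the colimit residue field is both perfect and infinite. This is exactly what forces the case split into ``$k$ imperfect'' versus ``$k$ finite'' and exploits that these two defects cannot occur simultaneously (finite fields are automatically perfect, imperfect fields automatically infinite). The remaining technicalities — possible loss of irreducibility of $X^\alpha$ after base change, and the formal commutation of $\operatorname{ext}$ with $\tr_{\pi_\alpha}$ — are routine and handled, respectively, by component-wise decomposition and by the naturality built into the construction of the trace in Lemma~\ref{TrSupp}.
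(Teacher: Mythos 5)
Your proof is correct and follows essentially the same strategy as the paper's: a filtered system of finite flat DVR extensions with the same uniformizer $\pi$ whose colimit has perfect infinite residue field, combined with the trace of Lemma~\ref{TrSupp} and the continuity statement of Lemma~\ref{limitPhi}. Two differences are worth noting. First, in the finite residue field case the paper takes the tower of degree-$p^\alpha$ extensions and kills the degrees using unique $p$-divisibility (property 7), whereas you take an $\ell$-power tower with $\ell\nmid rp$ and use that $\ell$ is invertible on $r$-torsion; both work, and yours avoids invoking property 7 in that case. Second, you descend by locating a finite stage $\alpha$ at which $\operatorname{ext}(\pi_\alpha^*\xi)$ already vanishes and applying $\tr_{\pi_\alpha}$ in one step, while the paper instead shows by transfinite induction that every restriction $\E_{\Phi^c}(U)\to\E_{\Phi^c}(U^\alpha)$ is injective (trace section at successor stages, Lemma~\ref{limitPhi} at limit stages) and reads off the vanishing at the top of the tower. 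These are equivalent, but your version tacitly requires every index of the system to be finite over $R$. That is automatic for your $\ell$-tower, but in the imperfect case a transfinite \emph{chain} exhausting the perfect closure has limit stages of infinite degree over $R$; to keep your one-step trace you should index by the directed poset of finite subextensions of $k'/k$ (for instance, fix lifts $\tilde t_i\in R$ of a $p$-basis and set $R^\nu=R[X_i]/(X_i^{p^{\nu(i)}}-\tilde t_i)$ for $\nu$ of finite support), or else fall back on the paper's stagewise injectivity argument. With that indexing made explicit the proof is complete; your remark on the possible reducibility of $X^\alpha$ after base change addresses a detail the paper leaves implicit.
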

\begin{proof}
    Suppose that the residue field $k$ is finite. Let $k^\alpha/k$ be the unique field extension of degree $p^{2^\alpha}$ with $\alpha \in \mathbb{Z}_{>0}$. For each $\alpha$, Lemma~\ref{dvrExt} yields a DVR $R^\alpha$ with residue field $k^\alpha$ that is a finite and flat extension of $R^{\alpha-1}$.

    Suppose that the residue field $k$ is imperfect, and denote by $k'$ its perfect closure. Choose a well-ordering on $k'$. For each $\alpha \in k'$, this order defines a subfield $k^\alpha \subset k'$ generated by $k$ and all elements of $k'$ less than $\alpha$ with respect to the order. We construct $R^\alpha$ by transfinite induction on $\alpha$. If $\alpha = \beta + 1$, then $k^\alpha/k^\beta$ is a finite simple extension of $p$-primary degree. Lemma~\ref{dvrExt} then yields a DVR $R^\alpha$ with residue field $k^\alpha$ that is a finite and flat extension of $R^\beta$. If $\alpha$ is a limit ordinal, define $R^\alpha$ as the inductive limit $\bigcup_{\beta < \alpha} R^\beta$. Since each $R^\beta$ is a DVR with uniformizing parameter $\pi$, the colimit $R^\alpha$ is also a DVR with uniformizer $\pi$.
    
    Let $X$, $x \subset X_v$, and $U$ be as in Lemma~\ref{zeroMaps}. We introduce the following notation:
    \[V^\alpha = \spec(R^\alpha), \quad R' = \varinjlim R^\alpha, \quad V' = \spec(R') = \varprojlim V^\alpha\]
    \[U^\alpha = U \times_V V^\alpha, \quad U' = U \times_V V' = \varprojlim U^\alpha\]
    \[X^\alpha = X \times_V V^\alpha, \quad X' = X \times_V V' = \varprojlim X^\alpha\]
    The residue field $k'$ of $R'$ is perfect and infinite. Let $\pi\colon X' \to X$ be the projection. Then $\pi^{-1}(x)$ is a closed subscheme of $X'$ that is finite over the closed point of $V'$. Therefore, $\pi^{-1}(x)$ is a finite set of closed points of $X'_{v'}$, and $U' = \spec(\mathcal{O}_{X', \pi^{-1}(x)})$. Consider the following commutative diagram:
    \[\begin{tikzcd}
	{\E_{\Phi^{c+1}}(U')} & {\E_{\Phi^c}(U')} \\
	{\E_{\Phi^{c+1}}(U)} & {\E_{\Phi^c}(U)}
	\arrow["{\operatorname{ext}}", from=1-1, to=1-2]
	\arrow[from=2-1, to=1-1]
	\arrow["{\operatorname{ext}}", from=2-1, to=2-2]
	\arrow[hook, from=2-2, to=1-2]
    \end{tikzcd}\]
    Let us prove by transfinite induction that the map $\E_{\Phi^c}(U) \to \E_{\Phi^c}(U^\alpha)$ is injective, and in particular, that the vertical maps in the diagram are injective. If $\alpha = \beta + 1$, then $U^\alpha$ is flat and finite of $p$-primary degree over $U^\beta$. Recall that $\E_{\Phi^c}(U)$ is uniquely $p$-divisible. Therefore, $\frac{1}{p^d} \tr_{U^\alpha/U^\beta}$ is a section of the map $\E_{\Phi^c}(U^\beta) \to \E_{\Phi^c}(U^\alpha)$, implying injectivity. If $\alpha$ is a limit ordinal, then $\E_{\Phi^c}(U^\alpha) = \varinjlim_{\beta < \alpha} \E_{\Phi^c}(U^\beta)$ by Lemma~\ref{limitPhi}, and the desired map is injective as a filtered colimit of injective maps. Finally, the top horizontal arrow in the diagram vanishes by assumption, so the bottom one vanishes as well.
\end{proof}

\section{The geometric case}\label{geometricCaseSection}

\begin{lemma}\label{classExtension}
Let $n \in \mathbb Z_{>0}.$ Assume that lemma \ref{injectivity} holds for any smooth affine $V$-scheme $X$ of relative dimension less than $n$. Let $X$ be an irreducible smooth $V$-scheme of relative dimension n, and let $Y, Z \subsetneq X$ be closed subsets such that no irreducible component of Y is contained in Z. Given an element $a \in \E(X-Y)$ such that its image in $\E(X-Y-Z)$ is zero. Given a finite set of closed points $y \subset Y_v$, one in each irreducible component, such that $Y$ with its reduced induced scheme structure is smooth over $V$ at these points. Then there is a closed subset $W \subset Y$, and an element $\tilde a \in \E(X-W)$ such that $W \cap y = \varnothing$ and $\tilde a|_{X-Y} = a$.
\end{lemma}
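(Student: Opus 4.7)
The plan is to reduce the extension problem to showing that the localization boundary $\partial a \in \E_Y(X)$ for the pair $(X,Y)$ becomes zero when restricted to some Zariski open $U_1 \subset X$ containing $y$. Once this is achieved, I set $W := Y \setminus U_1$, which is closed in $X$ and disjoint from $y$. Naturality of the localization sequence under the open immersion $X - W \hookrightarrow X$, together with the excision isomorphism $\E_{Y \cap U_1}(X - W) \cong \E_{Y\cap U_1}(U_1)$ (the closed complement $(X-Y) \setminus U_1$ is disjoint from the support $Y \cap U_1$), identifies the obstruction for lifting $a$ along $\E(X-W) \to \E(X-Y)$ with $(\partial a)|_{U_1}$, producing the desired $\tilde a$.

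First I would establish a vanishing at generic points. Comparing the localization sequences for $(X, Y)$ and $(X - Z,\, Y \setminus Y\cap Z)$ (using excision to replace $X - Y\cap Z$ by $X - Z$), naturality of $\partial$ combined with the hypothesis $a|_{X - Y - Z} = 0$ forces $\partial a$ to vanish under the natural map $\E_Y(X) \to \E_{Y \setminus Y \cap Z}(X - Y \cap Z)$. Since no component of $Y$ lies in $Z$, each generic point $\eta_i$ of a component $Y_i$ belongs to $Y \setminus Y \cap Z$, so $(\partial a)|_{\eta_i} = 0$ for all $i$.

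Next I would apply Purity locally at $y$ and feed in the inductive hypothesis. Shrink to an affine open $U_0 \subset X$ containing $y$ on which $Y^{\mathrm{red}}\cap U_0$ is smooth over $V$ and each connected component contains exactly one $y_i$ and lies in the corresponding $Y_i^{\mathrm{red}}$ (possible because smoothness of $Y^{\mathrm{red}}$ at $y_i$ forces local irreducibility, and the $y_i$ are distinct closed points). Property 4 then provides an isomorphism $\E_{Y \cap U_0}(U_0) \cong \E(Y^{\mathrm{red}} \cap U_0)$; let $c$ correspond to $(\partial a)|_{U_0}$. Naturality of Gysin under open immersions sends $c|_{\eta_i}$ to $(\partial a)|_{\eta_i} = 0$. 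Each $Y_i^{\mathrm{red}} \cap U_0$ is a smooth affine $V$-scheme of relative dimension strictly less than $n$, so Lemma~\ref{injectivity} (available in dimension $< n$ by induction) kills $c$ after semilocalizing at $y$. Continuity (Property 8) then yields an affine open $U_1 \subset U_0$ containing $y$ on which $c|_{Y^{\mathrm{red}} \cap U_1} = 0$; transporting back through Purity gives $(\partial a)|_{U_1} = 0$.

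The main obstacle is the Purity step: one must work with the reduced induced structure on $Y$ (which need not be reduced or globally smooth) and arrange the local picture at $y$ so that each connected component of $Y^{\mathrm{red}} \cap U_0$ is a smooth affine irreducible $V$-scheme of lower dimension with unique closed point $y_i$, allowing the inductive hypothesis to be applied componentwise. The remaining steps are formal manipulations with the localization exact sequence and excision.
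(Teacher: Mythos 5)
Your proposal is correct and follows essentially the same route as the paper: compare boundary maps to see that $\partial a$ dies at the generic points of $Y$, use purity to transfer it to $\E$ of the smooth locus of $Y$ near $y$, kill it in the semilocalization at $y$ via the inductive injectivity hypothesis plus continuity, and then lift $a$ through the localization sequence for $(X-W, X-Y)$. The only difference is bookkeeping (you restrict $\partial a\in\E_Y(X)$ to shrinking opens, while the paper first discards $W_0=Y-Y_0$), and you are in fact slightly more careful than the paper about applying purity componentwise when the components of $Y$ have different codimensions.
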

\begin{proof}
Let $Y_0 \subset Y$  be an open affine neighborhood of points $y$ that is smooth over $V$. $W_0 = Y - Y_0$. By $\partial$ denote the boundary map in the long exact sequence of the pair $(X-W_0, X-Y)$. There is a Gysin isomorphism $\gys: \E(Y_0) \rightarrow \E_{Y_0}(X-W_0)$. Therefore we have an element $\gys^{-1}\partial a \in \E(Y_0)$. Since $a|_{X-Y-Z} = 0$, the image of $\gys^{-1}\partial a$ in $\E(Y_0-Z)$ is also zero. Note that $Y_0$ is a smooth $V$-scheme of relative dimension less than $n$. Thus by our assumption the map $\E(\spec (\mathcal O_{Y_0, y})) \rightarrow \bigoplus \E(\eta_i)$ is injective, where $\eta_i$ are the generic points of $Y_0$. It follows that $\gys^{-1}\partial a$ vanishes in some open neighborhood $Y-W$ of points $y$. Then W is desired and the long exact sequence of the pair $(X-W, X-Y)$ yields $\tilde a$.

\[\begin{tikzcd}[column sep=small]
	{\E(X\!-\!W)} & {\E(X\!-\!Y)} & {\E_{Y-W}(X\!-\!Y)} & {\E(Y\!-\!W)} \\
	& {\E(X\!-\!Y)} & {\E_{Y_0}(X\!-\!W_0)} & {\E(Y_0)} & {\E(\spec (\mathcal O_{Y_0, y}))} \\
	& {\E(X\!-\!Y\!-\!Z)} & {\E_{Y_0-Z}(X\!-\!W_0\!-\!Z)} & {\E(Y_0\!-\!Z)} & {\bigoplus \E(\eta_i)}
	\arrow[from=1-1, to=1-2]
	\arrow["\partial", from=1-2, to=1-3]
	\arrow["\simeq", from=1-3, to=1-4]
	\arrow["\operatorname{id}", from=2-2, to=1-2]
	\arrow["\partial", from=2-2, to=2-3]
	\arrow[from=2-2, to=3-2]
	\arrow[from=2-3, to=1-3]
	\arrow["\simeq", from=2-3, to=2-4]
	\arrow[from=2-3, to=3-3]
	\arrow[from=2-4, to=1-4]
	\arrow[from=2-4, to=2-5]
	\arrow[from=2-4, to=3-4]
	\arrow[hook, from=2-5, to=3-5]
	\arrow["\partial", from=3-2, to=3-3]
	\arrow["\simeq", from=3-3, to=3-4]
	\arrow[from=3-4, to=3-5]
\end{tikzcd}\]
\end{proof}

\begin{lemma}\label{divisorExtension}
    Let $D_v \subset \AffSp^n_v$ be a reduced closed subscheme of pure codimension 1 that is smooth over $v$ at a finite set of points $x \subset \AffSp^n_v$. Given a finite set $z \subset \AffSp^n_V$ of codimension 1 points. Then there is a reduced closed subscheme $D \subset \AffSp^n_V$ of pure codimension 1 such that the closed fiber of $D$ over $v$ equals $D_v$, $D$ is smooth over $V$ at $x$, and $D\cap z=\varnothing$.
\end{lemma}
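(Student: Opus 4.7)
\vskip 7pt
\noindent\textbf{Proof plan.} I will realise $D$ as $V(f)_{\mathrm{red}}$ for an explicitly constructed polynomial $f \in R[t_1,\ldots,t_n]$. Fix a squarefree defining equation $\bar f \in k[t_1,\ldots,t_n]$ of $D_v$ and any lift $f_0 \in R[t_1,\ldots,t_n]$ of $\bar f$. Since $R[t_1,\ldots,t_n]$ is a UFD, every codimension-one point $z_j \in z$ distinct from the generic point of $\AffSp^n_v$ corresponds to a principal prime ideal $(h_j)$ with $h_j$ irreducible and coprime to $\pi$; enumerate this finite (possibly empty) list as $h_1,\dots,h_m$. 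Put $I = \{ j : h_j \mid f_0 \}$ and define
\[
f \;=\; f_0 \;+\; \pi \prod_{j \notin I} h_j, \qquad D \;=\; V(f)_{\mathrm{red}} \subset \AffSp^n_V.
\]
A direct primality check in the UFD gives $f \equiv \bar f \pmod{\pi}$ and $h_j \nmid f$ for every $j$; moreover $\bar f \neq 0$ forces $\pi \nmid f$, so the generic point of $\AffSp^n_v$ is also avoided if it appears in $z$. (Equivalently, one could invoke the Chinese remainder theorem for $R[t_1,\ldots,t_n]/(\pi h_1\cdots h_m)$.)

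The three conclusions then follow from analysing the factorisation $f = \prod p_i^{e_i}$ in $R[t_1,\ldots,t_n]$. Each irreducible factor $p_i$ has reduction $\bar p_i \in k[t_1,\ldots,t_n]$ that is zero, a unit, or a non-unit: the first is impossible since $\pi \nmid f$, and in the last case $\bar p_i$ is automatically squarefree and distinct non-unit $\bar p_i$'s are pairwise coprime, because they all divide the squarefree polynomial $\bar f$. Writing $f_{\mathrm{sqfree}} = \prod p_i$, one sees that $\overline{f_{\mathrm{sqfree}}}$ equals $\bar f$ up to a global unit (the unit-reduction factors contribute only a scalar), so the closed fibre of $D = V(f_{\mathrm{sqfree}})$ equals $V(\bar f) = D_v$ as subschemes, giving property~(a). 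Property~(c) has already been checked. For (b), smoothness of $D_v$ at each $x_i \in x$ forces $x_i$ to lie on a unique irreducible component of $D_v$, and this component comes from a unique irreducible factor $p_{i_0}$ of $f$ whose reduction vanishes at $x_i$; locally at $x_i$ the divisor $D$ coincides with $V(p_{i_0})$, and since $x_i$ lies in the closed fibre, $\partial p_{i_0}/\partial t_\ell(x_i) = \partial \bar p_{i_0}/\partial t_\ell(x_i)$ is nonzero for some $\ell$ by the smoothness hypothesis on $D_v$.

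The main obstacle is that $f$ itself need not be squarefree in $R[t_1,\ldots,t_n]$: it may acquire squared factors $p^2$ whose reduction $\bar p$ is a unit, corresponding to ``phantom'' components of $V(f)$ supported entirely over the generic fibre $\AffSp^n_\theta$. Passing to $V(f)_{\mathrm{red}}$ removes these, but one must then verify that the closed fibre and the local structure at $x$ are unaffected by the passage to reduction — which is precisely what the factorisation analysis above accomplishes.
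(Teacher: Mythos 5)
Your proof is correct and follows essentially the same route as the paper: lift a squarefree defining equation of $D_v$ to $R[t_1,\dots,t_n]$ while arranging that the lift avoids the height-one primes determined by $z$, and then deduce smoothness at $x$ from the Jacobian criterion, which commutes with reduction mod $\pi$. You supply two details the paper leaves implicit, namely the explicit formula $f_0+\pi\prod h_j$ producing the avoiding lift and the verification that passing to $V(f)_{\mathrm{red}}$ changes neither the closed fibre nor the local structure at $x$ (the paper sidesteps the latter point because the non-vanishing of the gradient at $x$ already forces $Z(F)$ to be regular, hence reduced, there).
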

\begin{proof}
    Say $D_v$ is given by a principal ideal $(f) \subset k[T_1, \dots, T_n]$. Let $F \in R[T_1, \dots, T_n]$ be any lifting of $f$ such that $D = \spec k[T_1, \dots, T_n]/(F)$ reduced and contains no points of $z$. The smoothness of $D/V$ at $x$ is automatic.
\end{proof}

\begin{lemma}\label{classMod}
    Let $n \in \mathbb Z_{>0}.$ Assume that lemma \ref{injectivity} holds for any smooth affine $V$-scheme $X$ of relative dimension less than $n$. Let $Y, Z \subsetneq \AffSp^n_V$ be closed subsets. Given an element $a \in \E(\AffSp^n_V-Y)$ such that its image in $\E(\AffSp^n_V-Y-Z)$ is zero. Given a finite set of closed points $x \subset X_v$ such that $Y \cap x=\varnothing$. Then there is a closed subset $\widetilde Y \subset \AffSp^n_V$, and an element $\tilde a \in \E(\AffSp^n_V - \widetilde Y)$ such that
    \begin{enumerate}
        \item $a$ and $\tilde a$ coincide in some open neighbourhood of $x$.
        \item $\codim(\widetilde Y_v, \AffSp^n_v)$ and $\codim(\widetilde Y_\theta, \AffSp^n_\theta)$ are at least 2.
    \end{enumerate}
\end{lemma}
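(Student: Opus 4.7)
The plan is to enlarge $Y$ into a closed subset $Y'\supset Y$ by adjoining horizontal divisors produced by Lemma~\ref{divisorExtension}, so that every codimension-one component of $Y'$ in $\AffSp^n_V$ is a horizontal divisor admitting a $V$-smooth closed-fibre point outside $x$. A single application of Lemma~\ref{classExtension} would then produce an extension of $a$ to the complement of some $W\subsetneq Y'$ in which every codimension-one component of $Y'$ has been strictly shrunk to codimension $\geq 2$; setting $\widetilde Y=W$ gives condition~(2), and condition~(1) holds because the enlargement is arranged to avoid $x$.

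First I would enumerate the irreducible codimension-one components $E_1,\dots,E_s$ of $Y_v$ in $\AffSp^n_v$. Since $k$ is perfect and infinite after the reduction of Section~\ref{reduction}, and since $Y\cap x=\varnothing$, each $E_i$ contains a closed point $y_i$ that is smooth over $v$, lies outside $x$, and avoids every other $E_j$. Lemma~\ref{divisorExtension} then provides horizontal divisors $\widetilde E_i\subset\AffSp^n_V$ with $\widetilde E_{i,v}=E_i$, smooth over $V$ at $y_i$, and with $\widetilde E_{i,\theta}$ chosen in general position (in particular avoiding the codimension-one loci of $Y_\theta$ and of $Z$). I set $Y':=Y\cup\widetilde E_1\cup\cdots\cup\widetilde E_s$, and observe that every vertical codimension-two component of $Y$ whose reduction is some $E_i$ becomes contained in $\widetilde E_i$ and is no longer an irreducible component of $Y'$.

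Next I would restrict $a$ to $a'\in\E(\AffSp^n_V-Y')$; the vanishing $a|_{\AffSp^n_V-Y-Z}=0$ persists as $a'|_{\AffSp^n_V-Y'-Z}=0$. Applying Lemma~\ref{classExtension} with $X=\AffSp^n_V$, $Y=Y'$, $Z=Z$ and the set $y=\{y_i\}$ augmented by $V$-smooth closed-fibre points on the remaining irreducible components of $Y'$ (chosen outside $x$), I obtain $W\subsetneq Y'$ avoiding $y$ and $\tilde a\in\E(\AffSp^n_V-W)$ with $\tilde a|_{\AffSp^n_V-Y'}=a'$. Every codimension-one component of $Y'$ meets $W$ in a proper closed subset, hence in codimension $\geq 2$ in $\AffSp^n_V$, and the remaining components of $W$ are already of fibrewise codimension $\geq 2$; thus $\widetilde Y:=W$ satisfies condition~(2). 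Since $Y'\cap x=\varnothing$, picking an affine open $U\ni x$ disjoint from $Y'$ gives $\tilde a|_U=a|_U$, which is condition~(1).

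The hard part will be arranging a $V$-smooth closed-fibre point outside $x$ on every codimension-one component of $Y'$. For the new lifts $\widetilde E_i$ this is automatic from Lemma~\ref{divisorExtension}; however, a pre-existing horizontal codimension-one component $D\subset Y$ may have an everywhere non-reduced closed fibre and so admit no $V$-smooth closed-fibre point, and the lift $\widetilde E_i$ of a codimension-one piece of $D_v^{\mathrm{red}}$ shares its closed fibre with $D$ and thereby destroys smoothness of $Y'$ at the candidate $y_i$. Overcoming this will require further, carefully arranged applications of Lemma~\ref{divisorExtension}, with generic fibres placed in general position relative to $D$, so that a suitable $V$-smooth closed-fibre point of $Y'$ outside $x$ eventually exists; making sure simultaneously that no irreducible component of the final $Y'$ is contained in $Z$ is a secondary technicality handled by the same genericity.
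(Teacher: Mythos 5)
There is a genuine gap, and you have in fact located it yourself in your last paragraph: your method has no way to dispose of a pre-existing \emph{horizontal} codimension-one component of $Y$. The idea you are missing is the paper's very first move: since $a|_{\AffSp^n_V-Y-Z}=0$, the localization sequence and Zariski excision let one \emph{extend $a$ by zero outside $Z$}, i.e.\ lift $a$ to $\E(\AffSp^n_V-(Y\cap Z))$ (after arranging that no component of $Z$ lies in $Y$). Replacing $Y$ by $Y\cap Z$ makes every component of $Y$ a proper closed subset of a component of $Z$, hence of codimension $\geqslant 2$ in $\AffSp^n_V$. This one step kills \emph{all} codimension-one components of $Y$ at once, horizontal and vertical alike, and it is the decisive use of the vanishing hypothesis. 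What remains afterwards is only the fiberwise problem --- $Y_v$ may still have codimension $1$ in $\AffSp^n_v$ --- and that is exactly what the combination of a reduced divisor $D_v\supset Y_v$, Lemma~\ref{divisorExtension}, and Lemma~\ref{classExtension} is designed to handle, while $\overline{Y_\theta}$ already has the right codimensions and is simply absorbed into $\widetilde Y$.

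Without this reduction your plan breaks for at least three independent reasons. First, the hypotheses allow $Y\subset Z$ (this is even the typical situation), in which case every component of $Y'$ containing a component of $Y$ violates the hypothesis of Lemma~\ref{classExtension} that no component of $Y'$ be contained in $Z$; no genericity in the choice of the new divisors $\widetilde E_i$ can repair a property of the old components. Second, a horizontal divisor such as $Z(T_1^2-\pi)$ has nowhere-reduced closed fiber, so it admits no point at which its reduction is $V$-smooth, and Lemma~\ref{classExtension} is simply inapplicable; Lemma~\ref{divisorExtension} cannot help because it only \emph{creates} divisors with prescribed closed fiber, it cannot modify a component of $Y$ across which the class must be extended. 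Third, even in the favorable case, Lemma~\ref{classExtension} only guarantees $W\cap y=\varnothing$ for one chosen point per irreducible component of $Y'$; to get $\codim(\widetilde Y_v,\AffSp^n_v)\geqslant 2$ you need $W$ to omit a point in every irreducible component of the \emph{closed fiber} of each codimension-one piece, which is why the paper applies Lemma~\ref{classExtension} not to $Y$ but to a divisor $D$ whose closed fiber is exactly the reduced divisor $D_v\supset Y_v$ with smooth points chosen in each component of $D_v-\overline{Y_\theta}$. Your treatment of the purely vertical components via the lifts $\widetilde E_i$ is in the right spirit, but the proof cannot be completed along your lines without the extension-by-zero step.
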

\begin{proof}
    Without loss of generality, we can assume that no component of $Z$ is contained in $Y$. Let us extend $a$ by zero outside of Z. Therefore $a \in \E(\AffSp^n_V-(Y\cap Z))$. Replacing $Y$ with $Y\cap Z$, we can assume that $Y \subset Z$.  Denote by $\overline{Y_\theta}$ the closure of $Y_\theta$ in $\AffSp^n_V$. Choose a divisor $D_v \subset \AffSp^n_v$ such that $Y_v\subset D_v$ and $D_v\cap x =\varnothing$. Since the residue field $k$ is perfect, $D_v$ is generically smooth over $v$. Note that $\codim (\overline{Y_\theta}, \AffSp^n_V)\geqslant 2$. Consequently $\codim ({Y_\theta}, \AffSp^n_\theta) \geqslant 2$, $\codim ((\overline{Y_\theta})_v, \AffSp^n_v) \geqslant 2$ and therefore $D_v$ and $D_v-\overline{Y_\theta}$ have the same generic points. Choose a closed smooth point in each irreducible component of $D_v-\overline{Y_\theta}$. Lemma $\ref{divisorExtension}$ yields a divisor $D \subset \AffSp^n_V$ such that the closed fibre of $D$ over $v$ equals $D_v$, $D$ is smooth over $V$ at the chosen points, and no irreducible component of $D$ is contained in $Z$.

    Let us apply Lemma $\ref{classExtension}$ to the scheme $X = \AffSp^n_V-\overline{Y_\theta}$, its closed subsets $D-\overline{Y_\theta}$, $Z-\overline{Y_\theta}$ and an image of $a$ in $\E(\AffSp^n_V-\overline{Y_\theta}-D)$. This yields a closed subspace $W \subset D-\overline{Y_\theta}$ and an element $\tilde a \in \E(\AffSp^n_V-\overline{Y_\theta}-W)$. Define $\widetilde Y = \overline{Y_\theta}\cup W$. It remains to check that $\tilde a$ and $\widetilde Y$ are desired. Indeed, elements $a$ and $\tilde a$ coincide in ${\AffSp^n_V-\overline{Y_\theta}-D}$. Denote by $\overline W$ a closure of $W$ in $\AffSp^n_V$. Finally, $\codim((\overline W)_v, \AffSp^n_v) > \codim(D_v, \AffSp^n_v) \geqslant 1$ and $\codim((\overline W)_\theta, \AffSp^n_\theta) \geqslant \codim(\overline W, \AffSp^n_V) > \codim(D, \AffSp^n_V) \geqslant 1$.
\end{proof}

\begin{lemma}\label{finiteProjection}
    Let $S$ be an affine scheme and $t \colon S \rightarrow \ProjSp^n_S$ be any $S$-point of $\ProjSp^n_S$. Given a closed subscheme $M \subset \ProjSp^n_S-t(S)$. Denote by $pr\colon \ProjSp^n_S-t(S)\rightarrow \ProjSp^{n-1}_S$ a projection from $t$. Then $pr|_M: M\rightarrow \ProjSp^{n-1}_S$ is finite morphism.
\end{lemma}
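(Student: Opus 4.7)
The plan is to verify the two hypotheses of the classical criterion ``proper plus finite fibres implies finite'' for the morphism $pr|_M$. I read the hypothesis ``$M \subset \ProjSp^n_S - t(S)$ is a closed subscheme'' as saying that $M$ is closed in $\ProjSp^n_S$ and disjoint from $t(S)$; the literal reading (closed only in the open subscheme) already fails for $n=1$, as the closed subscheme $V(ty-1) \subset \AffSp^1_{\spec k[t]}$ shows, and the projective reading is the one actually needed in the applications.

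Properness is automatic: $M$ is proper over $S$ as a closed subscheme of the proper $S$-scheme $\ProjSp^n_S$, and any $S$-morphism from a proper $S$-scheme to a separated $S$-scheme is proper, so $pr|_M \colon M \to \ProjSp^{n-1}_S$ is proper. For finiteness of the fibres, I use the standard geometric description of $pr$: for any $q \in \ProjSp^{n-1}_S$ lying above $s \in S$, the fibre $pr^{-1}(q)$ is an affine line $\AffSp^1_{k(q)}$ whose closure inside the projective fibre $\ProjSp^n_{k(s)}$ is the projective line through $t(s)$ and $q$, i.e.\ a copy of $\ProjSp^1_{k(q)}$ passing through $t(s)$. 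Hence $(pr|_M)^{-1}(q) = M \cap \ProjSp^1_{k(q)}$ is a closed subscheme of $\ProjSp^1_{k(q)}$ that omits the point $t(s)$, so it is a proper closed subscheme of a projective line, hence $0$-dimensional and finite.

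There is no real mathematical obstacle here, because the statement is essentially the elementary fact ``a closed subscheme of a $\ProjSp^1$-bundle disjoint from a section is finite over the base.'' This is made transparent by the alternative approach of blowing up $\ProjSp^n_S$ at $t(S)$ to obtain a $\ProjSp^1$-bundle $\widetilde{X} \to \ProjSp^{n-1}_S$ extending $pr$, with the exceptional divisor appearing as a section disjoint from $M$; the argument above is then just the fibrewise computation for this bundle. The only matter requiring care is the interpretation of the hypothesis, discussed above.
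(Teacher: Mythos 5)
Your proof is correct. The paper itself gives no proof of this lemma (it is stated as standard and left unproved), so there is no argument to compare against; your route --- properness of $pr|_M$ from the closedness of $M$ in $\ProjSp^n_S$, finiteness of fibres from the fact that each fibre of $pr$ is an affine line whose projective closure meets $t(S)$, and then the criterion ``proper and quasi-finite implies finite'' --- is the standard one and fills the gap cleanly. Your remark about the hypothesis is also well taken: the lemma as literally stated (with $M$ closed only in the open subscheme $\ProjSp^n_S - t(S)$) is false already for $n=1$, and the reading you adopt, with $M$ closed in $\ProjSp^n_S$ and disjoint from $t(S)$, is the one actually used in the paper, where the lemma is applied to the projective closures $\widebar{Y}$ and $\widebar{Y_w}$ in the proof of Lemma~\ref{cylinder}.
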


\begin{lemma}\label{pointExist}
    Let $U$ be an open subscheme of $\ProjSp^n_V$ that intersects the closed fiber $\ProjSp^n_v$. Since the residue field $k=k(v)$ is infinite, there exists $V$-point of $U$.
\end{lemma}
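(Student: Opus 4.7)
The plan is to produce the $V$-point by first locating a $k$-rational point in the closed fiber $U_v$, lifting it to homogeneous coordinates over $R$, and then observing that the resulting morphism automatically factors through $U$ by openness.

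First, by assumption $U_v := U \cap \ProjSp^n_v$ is a nonempty open subscheme of $\ProjSp^n_v \cong \ProjSp^n_k$. Since $k$ is infinite, the set of $k$-rational points is Zariski dense in $\ProjSp^n_k$ (for instance, the image of $\AffSp^n(k)$ under the standard open embedding is dense in each affine chart, and the charts cover $\ProjSp^n_k$). Therefore $U_v$ contains a $k$-rational point $\bar x = [\bar a_0 : \dotsb : \bar a_n]$, where without loss of generality $\bar a_0 = 1$.

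Second, I lift $\bar x$ to a $V$-point of $\ProjSp^n_V$. Choose arbitrary lifts $a_i \in R$ of $\bar a_i$ with $a_0 = 1$. Since $R$ is local and $1 \in R$ is a unit, the tuple $(1, a_1, \dotsc, a_n)$ generates the unit ideal of $R$, so it defines a morphism $t\colon V \to \ProjSp^n_V$ over $V$, i.e. a $V$-point, whose value at the closed point $v$ is exactly $\bar x$.

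Finally, I must check that $t$ factors through $U$. The image of $t$ consists of two points: the image of $v$, which is $\bar x \in U$, and the image of the generic point $\theta$, which is some point of $\ProjSp^n_V$ whose closure contains $\bar x$, i.e.\ a generization of $\bar x$. Since $U$ is open, its complement is closed and hence stable under specialization; equivalently, $U$ is stable under generization. Thus the image of $\theta$ lies in $U$ as well, so $t$ factors through $U$, giving the desired $V$-point of $U$. There is no real obstacle here; the only input used is the density of $k$-rational points in $\ProjSp^n_k$ for infinite $k$, together with the fact that $R$ is local.
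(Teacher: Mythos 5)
Your proof is correct and complete; the paper states this lemma without any proof, treating it as standard. Your argument --- density of $k$-rational points in the nonempty open $U_v \subset \ProjSp^n_k$ for $k$ infinite, lifting the (unimodular, since $a_0=1$ is a unit in the local ring $R$) homogeneous coordinates to a $V$-point of $\ProjSp^n_V$, and then using that $t(\theta)$ is a generization of $t(v)=\bar x\in U$ so that the open set $U$ absorbs it --- is exactly the expected one and has no gaps.
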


\begin{lemma}\label{cylinder}
    Let $Y \subset \AffSp^n_V$ be a closed subset such that $\codim(Y_v, \AffSp^n_v)$ and $\codim(Y_\theta, \AffSp^n_\theta)$ are at least 2. Let $x \subset \AffSp^n_v-Y$ be a finite set of closed points, and let $Z \subset \AffSp^n_V$ be a closed subset that does not contain the closed fiber. Then there exists an isomorphism $\AffSp^n_V\cong \AffSp^1_V\times_V\AffSp^{n-1}_V$, where the projection onto the second factor is denoted by $\operatorname{pr}$, and there exists an open subscheme $U \subset \AffSp^{n-1}_V$ of the second factor such that
    \begin{enumerate}
        \item $\operatorname{pr}|_Y: Y \rightarrow \AffSp^{n-1}_V$ is a finite morphism. 
        \item $U = \AffSp^{n-1}_V - \operatorname{pr}(Y)$.
        \item $\operatorname{pr}(x) \subset U$ 
        \item $\operatorname{pr}^{-1}(\operatorname{pr}(x)) \not\subset Z$
    \end{enumerate}
\end{lemma}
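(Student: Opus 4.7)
The plan is to realize $\AffSp^n_V$ as the standard affine chart of $\ProjSp^n_V$, with complement $H_\infty \cong \ProjSp^{n-1}_V$, and to construct $\operatorname{pr}$ as the linear projection from a well-chosen $V$-point $t \in H_\infty$. Since any $V$-point of $\ProjSp^{n-1}_V$ has at least one homogeneous coordinate equal to a unit of $R$, a $V$-linear change of coordinates lets me assume $t = [0:a_1:\cdots:a_{n-1}:1]$; projection from this $t$ then identifies $\AffSp^n_V$ with $\AffSp^1_V \times_V \AffSp^{n-1}_V$ so that $\operatorname{pr}$ is the projection onto the second factor (so (2) is just the definition of $U$).

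Let $\overline{Y} \subset \ProjSp^n_V$ denote the closure of $Y$. I would introduce three kinds of closed ``bad'' subsets of $H_\infty$: the set $B_1 = \overline{Y} \cap H_\infty$; for each $p \in x$, the image $B_3^p \subset H_\infty$ of $Y$ under the linear projection from $p$ (so $t \in B_3^p$ iff the line through $p$ with direction $t$ meets $Y$); and, for each $p \in x$, the locus $B_4^p \subset H_\infty$ of directions $t$ for which the entire line through $p$ with direction $t$ lies in $Z$. A $V$-point $t$ lying outside $B_1 \cup \bigcup_{p} B_3^p \cup \bigcap_{p} B_4^p$ satisfies all four conditions: $t \notin B_1$ and Lemma~\ref{finiteProjection} give (1); $t \notin \bigcup_p B_3^p$ gives $\operatorname{pr}(x) \cap \operatorname{pr}(Y) = \varnothing$, i.e.\ (3); and $t \notin \bigcap_p B_4^p$ produces some $p \in x$ with $\operatorname{pr}^{-1}(\operatorname{pr}(p)) \not\subset Z$, i.e.\ (4).

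The heart of the argument is therefore a codimension count in the closed fiber $H_{\infty,v} \cong \ProjSp^{n-1}_v$. The hypothesis $\codim(Y_v, \AffSp^n_v) \geqslant 2$ forces $\dim \overline{Y}_v \leqslant n-2$, whence $(B_1)_v$ and each $(B_3^p)_v$ (the image of $Y_v$ under projection from a closed point) have dimension at most $n-2$, so are proper closed in $H_{\infty,v}$. The delicate case is $\bigcap_p B_4^p$: fixing any single $p \in x$, I would observe that if $(B_4^p)_v$ equalled $H_{\infty,v}$, then every line through $p$ in $\AffSp^n_v$ would lie in $Z_v$, forcing $\AffSp^n_v \subset Z_v$ and contradicting the hypothesis that $Z$ does not contain the closed fiber. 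This is the only place where that hypothesis enters, and I expect it to be the main subtlety — one has to argue with whole lines rather than individual points.

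Once every bad locus is proper closed in the irreducible scheme $H_{\infty,v}$, so is their finite union, and the complement is an open subscheme of $H_\infty \cong \ProjSp^{n-1}_V$ meeting the closed fiber. Lemma~\ref{pointExist} (applied to $\ProjSp^{n-1}_V$, using that $k$ is infinite thanks to Section~\ref{reduction}) then supplies the required $V$-point $t$, and setting $U = \AffSp^{n-1}_V - \operatorname{pr}(Y)$ completes the construction.
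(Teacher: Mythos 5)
Your overall strategy --- realize $\operatorname{pr}$ as the projection from a well-chosen $V$-point of the hyperplane at infinity, collect a closed ``bad'' locus for each condition, and kill them all by a dimension count in $\ProjSp^{n-1}_v$ before invoking Lemma~\ref{pointExist} --- is exactly the paper's. Your handling of conditions (1), (2) and (4) is essentially sound; for (4) the paper argues a bit differently (the bad directions lie in $\overline{Z_v}\cap\ProjSp^{n-1}_v$, which has dimension $\leqslant n-2$), but your covering-by-lines contradiction works equally well. One small imprecision: the bound $\dim\overline{Y}_v\leqslant n-2$ uses \emph{both} fiberwise codimension hypotheses, since a horizontal component of $Y$ may miss the closed fiber of $\AffSp^n_V$ entirely while its projective closure still has a nonempty closed fiber controlled only by $\codim(Y_\theta,\AffSp^n_\theta)$.

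The genuine gap is in condition (3), and it is precisely where the paper does its real work. A point $p\in x$ is a closed point of $\AffSp^n_v$ but need not be $k$-rational, so ``the linear projection from $p$'' is not a morphism defined over $k$, and ``the line through $p$ with direction $t$'' is not the fiber $\operatorname{pr}_t^{-1}(\operatorname{pr}_t(p))$: geometrically that fiber is the union of the lines through \emph{all} Galois conjugates of $p$. Consequently $\operatorname{pr}_t(p)\in\operatorname{pr}_t(Y)$ can occur because a conjugate line meets $Y$ even when your chosen line does not, so the set $B_3^p$ as you define it is too small and avoiding it does not guarantee (3); moreover, as described it is not visibly a closed subset of $\ProjSp^{n-1}_v$. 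The repair --- which constitutes most of the paper's proof of this condition --- is to pass to the Galois closure $k(w)$ of $k(p)$, write the base change of $p$ as a union of $w$-rational points $x_{w,i}$, take the union of the images of $\overline{Y_w}$ under the projections from all the $x_{w,i}$, and push this forward along the finite map $\ProjSp^{n-1}_w\to\ProjSp^{n-1}_v$. The resulting set is closed of dimension $\leqslant n-2$, so your dimension count survives, but without this step the argument for (3) is incomplete. (For condition (4) the same issue is harmless: there you only need one geometric line through $p$ to escape $Z$, and any single conjugate line suffices.)
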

\begin{proof}
    For each $V$-point of the scheme $\ProjSp^{n-1}_V$, there is a projection $\operatorname{pr}\colon \AffSp^n_V \rightarrow\AffSp^{n-1}_V$ from that point, and there is a corresponding isomorphism $\AffSp^n_V\cong \AffSp^1_V\times_V\AffSp^{n-1}_V$. Our goal is to find an open subscheme of $\ProjSp^{n-1}_V$ that intersects $\ProjSp^{n-1}_v$ and such that $V$-points of this subscheme satisfy conditions 1-4. The result then follows from lemma $\ref{pointExist}$.

    Let us introduce some notation. For any closed subscheme $X\subset\AffSp^n_V$, denote by $\overline X$ its closure in the projective space $\ProjSp^n_V$. For any $V$-point $p\in\ProjSp^{n-1}_V(V)$, denote by $p_v\in\ProjSp^{n-1}_v(v)$ the corresponding $v$-point. There are corresponding projections $\operatorname{pr}\colon \AffSp^n_V \rightarrow\AffSp^{n-1}_V$ and $\operatorname{pr}_v\colon \AffSp^n_v \rightarrow\AffSp^{n-1}_v$.

    Condition 1. By lemma $\ref{finiteProjection}$, for any $V$-point of $\ProjSp^{n-1}_V$ that does not intersect $\overline Y$, the condition is satisfied. It remains to check that $\overline Y$ does not contain $\ProjSp^{n-1}_v$. By assumption, $\codim(Y, \AffSp_V) \geqslant 2$. Note that $\ProjSp^{n-1}_v \subset \ProjSp^n_V$ is also of codimension 2 and irreducible. But no irreducible component of $Y$ is equal to $\ProjSp^{n-1}_v$.

    Condition 2. Since $\operatorname{pr}|_Y$ is a finite morphism, $\operatorname{pr}(Y)$ is closed. Then $U = \AffSp^{n-1}_V - \operatorname{pr}(Y)$ is open.

    Condition 3. Without loss of generality we can assume that $x$ consists of just one closed point. That is because we can satisfy the condition for each point independently. Since $\operatorname{pr}_v(x) = \operatorname{pr}(x)$ and $\operatorname{pr}_v(Y_v) = (\operatorname{pr}(Y))_v$, it is sufficient to check that $\operatorname{pr}_v(x) \notin \operatorname{pr}_v(Y_v)$. Denote by $k(w)$ Galois closure of the residue field $k(x)$ of the point $x$, and denote $w = \spec(k(w))$. Note that $k(x)$, and accordingly $k(w)$, are finite extensions of $k=k(v)$ since $x$ is closed. Denote by $\operatorname{pr}_w, x_w$ and $Y_w$ the base change of $\operatorname{pr}_v, x$ and $Y_v$ along $w \rightarrow v$. Also denote by $p_w\in\ProjSp^{n-1}_w(w)$ the $w$-rational point which corresponds to $p_v$. Obviously, $\operatorname{pr}_w$ is a projection from $p_w$. By the choice of $w$,  $x_w$ is a finite union of $w$-rational points $x_{w,i}$. From $x \notin Y$ we conclude that $x_w \cap Y_w =\varnothing$. Therefore, by lemma $\ref{finiteProjection}$ there is a finite projection $\operatorname{pr}_{i}: \overline {Y_w} \rightarrow \ProjSp^{n-1}_w$, where $\overline{Y_w}$ is a closure of $Y_w$ in $\ProjSp^n_w$. Consider line $l_i \subset \ProjSp^n_w$ between points $x_{w,i}$ and $p_w$. Then $\operatorname{pr}_w(x_{w,i}) \in \operatorname{pr}_w(Y_w) \Rightarrow \overline{Y_w} \cap l_i \neq\varnothing \Rightarrow p_w \in \operatorname{pr}_{i}(\overline{Y_w})$. Assume that $\operatorname{pr}_v(x) \in \operatorname{pr}_v(Y_v)$. Then $\operatorname{pr}_w(x_w) \cap \operatorname{pr}_w(Y_w) \neq\varnothing$ and for some index $i$ we have $p_w \in \operatorname{pr}_{i}(\overline{Y_w})$. Denote by $\pi$ the finite projection $\ProjSp^n_w \rightarrow \ProjSp^n_v$. Finally, $p_v = \pi(p_w) \in \pi(\operatorname{pr}_{i}(\overline{Y_w}))$. It is sufficient to check that $\bigcup_i \pi(\operatorname{pr}_{i}(\overline{Y_w})) \neq \ProjSp^{n-1}_v$, but this is from the dimension argument. $\dim (\pi(\operatorname{pr}_{i}(\overline{Y_w}))) = \dim (\operatorname{pr}_{i}(\overline{Y_w})) = \dim (\overline{Y_w}) = \dim Y_w = \dim Y_v \leqslant n-2$.

    Condition 4. Assume that $\operatorname{pr}^{-1}(\operatorname{pr}(x)) \subset Z$. Then, obviously, $\operatorname{pr}^{-1}(\operatorname{pr}(x)) \subset Z_v$ and $p_v \in \overline{Z_v}$, where $\overline{Z_v}$ is the closure of $Z_v$ in the $\ProjSp^n_v$. $\overline{Z_v}$ does not contain $\ProjSp^{n-1}_v$ since its irreducible components are of dimension $\leqslant n-1$ and do not equal $\ProjSp^{n-1}_v$.
\end{proof}

\begin{corollary}\label{injCor}
    Under the conditions of Lemma~\ref{cylinder}, any element of $\E(\AffSp^n_V - Y)$ that vanishes in $\E(\AffSp^n_V - Y - Z)$ also vanishes in some open neighborhood of $x \subset \AffSp^n_V - Y$.
\end{corollary}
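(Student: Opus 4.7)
The plan is to exploit the $\AffSp^1$-bundle structure $\operatorname{pr}\colon\AffSp^n_V\cong\AffSp^1_V\times_V\AffSp^{n-1}_V\to\AffSp^{n-1}_V$ supplied by Lemma~\ref{cylinder}, together with the translation invariance of property~3, in order to move the support of $a$ off the finite set $x$. First I would restrict $a$ to the open $\operatorname{pr}^{-1}(U)=\AffSp^1_V\times_V U$; by condition~(2) of Lemma~\ref{cylinder} this open lies inside $\AffSp^n_V-Y$, so we obtain a class $a|_{\operatorname{pr}^{-1}(U)}\in\E(\operatorname{pr}^{-1}(U))$ which by hypothesis vanishes after further restriction to $\operatorname{pr}^{-1}(U)-W$, where $W=Z\cap\operatorname{pr}^{-1}(U)$.

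The crucial geometric observation is that the fibre $\ell=\operatorname{pr}^{-1}(\widetilde x)$ (with $\widetilde x=\operatorname{pr}(x)$) is one-dimensional, being a disjoint union of affine lines $\AffSp^1_{\kappa(\widetilde x_i)}$, and by condition~(4) of Lemma~\ref{cylinder} it is not contained in $Z$; hence $W\cap\ell=Z\cap\ell$ is a finite set of closed points of $\ell$. Having reduced (in Section~\ref{reduction}) to the case where the residue field $k$ is perfect and infinite, I would then pick a scalar $t\in R$ such that the translation $\tau_t\colon\AffSp^1_V\to\AffSp^1_V$, acting on the first factor of the product decomposition, sends $W\cap\ell$ to a subset of $\ell$ disjoint from $x$. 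Indeed, for each pair $(w,\xi)\in(W\cap\ell)\times x$ living in the same component $\AffSp^1_{\kappa(\widetilde x_i)}$, the equation $\tau_{\bar t}(w)=\xi$ forces $\bar t$ to be a difference of Galois conjugates of the two points, giving at most finitely many bad values of $\bar t\in\kappa(\widetilde x_i)$; summing over the finitely many pairs still yields a finite set of forbidden $\bar t\in k$, which the infinitude of $k$ permits us to avoid.

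By property~3, the pullback $\tau_t^{*}$ is the identity on $\E(\operatorname{pr}^{-1}(U))$; combined with $a|_{\operatorname{pr}^{-1}(U)-W}=0$, a short functoriality argument (decomposing the inclusion $\operatorname{pr}^{-1}(U)-W\hookrightarrow\operatorname{pr}^{-1}(U)$ as $\tau_t$ applied to the inclusion $\operatorname{pr}^{-1}(U)-\tau_t(W)\hookrightarrow\operatorname{pr}^{-1}(U)$, and using that $\tau_t|_{\operatorname{pr}^{-1}(U)-W}$ is an isomorphism onto $\operatorname{pr}^{-1}(U)-\tau_t(W)$) transports the vanishing to $a|_{\operatorname{pr}^{-1}(U)-\tau_t(W)}=0$. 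Since $\tau_t$ preserves the fibres of $\operatorname{pr}$, one has $\tau_t(W)\cap\ell=\tau_t(W\cap\ell)$, which is disjoint from $x$ by the choice of $t$; combined with $x\subset\ell$ this gives $x\cap\tau_t(W)=\varnothing$. Thus $\operatorname{pr}^{-1}(U)-\tau_t(W)$ is the desired open neighbourhood of $x$ in $\AffSp^n_V-Y$ on which $a$ vanishes.

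The main obstacle is the choice of the translation vector: it simultaneously requires the finiteness of $W\cap\ell$ (delivered by condition~(4) of Lemma~\ref{cylinder}, which clarifies why all four conditions of that lemma are necessary) and the infinitude of $k$ (delivered by the reduction in Section~\ref{reduction}); both pieces of the preceding machinery are thus indispensable.
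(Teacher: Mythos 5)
Your proposal is correct and follows essentially the same route as the paper: restrict to $\operatorname{pr}^{-1}(U)\subset\AffSp^n_V-Y$, use condition~(4) and the infinitude of $k$ to find a translation along the $\AffSp^1$-factor moving $Z$ off $x$, and invoke homotopy invariance (property~3) to transport the vanishing from $\operatorname{pr}^{-1}(U)-Z$ to $\operatorname{pr}^{-1}(U)-\mathrm{T}(Z)$. The only difference is that you spell out the counting argument for the existence of the shift, which the paper leaves implicit.
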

\begin{proof}
    By condition (4) of Lemma~\ref{cylinder}, there exists a shift $\mathrm{T}\colon \AffSp^1_V \times_V U \to \AffSp^1_V \times_V U$ along $\AffSp^1_V$ such that $x \cap \mathrm{T}(Z) = \varnothing$. Since $\E$ is homotopy invariant, we have the commutative diagram:
    \[\begin{tikzcd}
        {\E(\AffSp^1_V\times_V U)} & {\E(\AffSp^1_V\times_V U)} \\
        {\E(\AffSp^1_V\times_V U - Z)} & {\E(\AffSp^1_V\times_V U - \operatorname{T}(Z))}
        \arrow["{\operatorname{id}}", from=1-1, to=1-2]
        \arrow[from=1-1, to=2-1]
        \arrow[from=1-2, to=2-2]
        \arrow["\operatorname{T}", from=2-1, to=2-2]
    \end{tikzcd}\]
    Then $\AffSp^1_V \times_V U - \mathrm{T}(Z)$ is the desired open neighborhood.
\end{proof}

\begin{corollary}\label{zeroMapCor}
    Under the conditions of Lemma~\ref{cylinder}, any element of $\E_Z(\AffSp^n_V - Y)$ vanishes in some neighborhood of $x \subset \AffSp^n_V - Y$ after support extension to $\operatorname{pr}^{-1}(\overline{\operatorname{pr}(Z)})$.
\end{corollary}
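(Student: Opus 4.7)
The plan is to adapt the shift trick of Corollary~\ref{injCor} to classes with supports. The key structural observation is that the enlarged support $Z' := \operatorname{pr}^{-1}(\overline{\operatorname{pr}(Z)})$ is a cylinder along $\AffSp^1_V$: any $V$-translation $T$ of the first factor satisfies $T^{-1}(Z') = Z'$, so $T^*$ acts as an endomorphism of $\E_{Z'}$, even though it genuinely moves the smaller support $Z$.

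First I will restrict $a$ to the open neighborhood $B := \AffSp^1_V \times_V U$ of $x$ in $\AffSp^n_V - Y$. Condition~(4) of Lemma~\ref{cylinder} says $\operatorname{pr}^{-1}(\operatorname{pr}(x)) \not\subset Z$, so $Z \cap \operatorname{pr}^{-1}(\operatorname{pr}(x))$ is a finite set of points on the affine line $\operatorname{pr}^{-1}(\operatorname{pr}(x))$. Since the residue field $k$ is infinite after the reduction of Section~\ref{reduction}, I will choose a $V$-rational shift $T$ along $\AffSp^1_V$ with $T^{-1}(Z) \cap x = \varnothing$, and set $N := B - T^{-1}(Z)$, which is an open neighborhood of $x$ in $\AffSp^n_V - Y$.

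Next I will check that $T^* = \operatorname{id}$ on $\E_{Z'}(B)$. By homotopy invariance (property~3), $T^*$ is the identity on $\E(B)$ and on $\E(B - Z') = \E(\AffSp^1_V \times_V (U - \overline{\operatorname{pr}(Z)}))$; the five-lemma applied to the localization sequence then forces $T^* = \operatorname{id}$ on $\E_{Z'}(B)$ as well. Naturality of support extension with respect to $T^*$ then gives the commutative square
\[
\begin{tikzcd}
\E_Z(B) \ar[r, "\operatorname{ext}"] \ar[d, "T^*"'] & \E_{Z'}(B) \ar[d, "\operatorname{id}"] \\
\E_{T^{-1}(Z)}(B) \ar[r, "\operatorname{ext}"] & \E_{Z'}(B)
\end{tikzcd}
\]
which yields the identity $\operatorname{ext}(a) = \operatorname{ext}(T^* a)$ in $\E_{Z'}(B)$.

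Finally, $T^* a$ has support in $T^{-1}(Z)$, which is disjoint from $N$ by construction; thus $(T^* a)|_N \in \E_{\varnothing}(N) = 0$, and naturality of $\operatorname{ext}$ with respect to $N \hookrightarrow B$ then yields $\operatorname{ext}(a)|_N = 0$, giving the desired vanishing in a neighborhood of $x$. The only substantive step is the identification $T^* = \operatorname{id}$ on $\E_{Z'}(B)$: homotopy invariance on $B - Z'$ requires $Z'$ to be a $\operatorname{pr}$-cylinder, which is precisely why one must replace $Z$ by $\operatorname{pr}^{-1}(\overline{\operatorname{pr}(Z)})$ in the statement.
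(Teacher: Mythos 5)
Your proof is correct and is essentially the paper's argument: both use Condition~(4) of Lemma~\ref{cylinder} to pick a translation along the $\AffSp^1_V$-factor moving $Z$ off $x$, observe that the translation fixes the cylinder $\operatorname{pr}^{-1}(\overline{\operatorname{pr}(Z)})$ and hence acts as the identity on cohomology supported there, and then restrict to the complement of the translated support. Your explicit five-lemma justification of $T^*=\operatorname{id}$ on $\E_{Z'}(B)$ just spells out what the paper asserts directly from homotopy invariance.
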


\begin{proof}
    By Condition~(4) of Lemma~\ref{cylinder}, there exists a shift $\mathrm{T}\colon \AffSp^1_V \times_V U \to \AffSp^1_V \times_V U$ along $\AffSp^1_V$ such that $x \cap \mathrm{T}(Z) = \varnothing$. Let $Z' = \operatorname{pr}^{-1}(\overline{\operatorname{pr}(Z)})$ and $W = \AffSp^1_V \times_V U - \mathrm{T}(Z)$. Since $\mathrm{T}(Z') = Z'$ and $\E$ is homotopy invariant, we have the commutative diagram:
    \[\begin{tikzcd}
        {\E_{\operatorname{T}(Z) \cap W}(W)} & {\E_{Z' \cap W}(W)} \\
        {\E_{\operatorname{T}(Z)}(\AffSp_V^1 \times_V U)} & {\E_{Z'}(\AffSp_V^1 \times_V U)} \\
        {\E_{Z}(\AffSp_V^1 \times_V U)} & {\E_{Z'}(\AffSp_V^1 \times_V U)}
        \arrow[from=1-1, to=1-2]
        \arrow[from=2-1, to=1-1]
        \arrow[from=2-1, to=2-2]
        \arrow[from=2-2, to=1-2]
        \arrow["\operatorname{T}", from=3-1, to=2-1]
        \arrow[from=3-1, to=3-2]
        \arrow["\operatorname{id}", from=3-2, to=2-2]
    \end{tikzcd}\]
    Note that $\mathrm{T}(Z) \cap W = \varnothing$. Hence, the composition $\E_{Z}(\AffSp_V^1 \times_V U) \to \E_{Z' \cap W}(W)$ vanishes.
\end{proof}

\begin{lemma}\label{reductionToAff}
    To prove Lemma~\ref{zeroMaps} in dimension $n$, it suffices to consider the case $X = \AffSp^n_V$.
\end{lemma}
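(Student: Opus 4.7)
The plan is to invoke the semi-local geometric presentation lemma over $V$ (Theorem~\ref{geomPres}, due to Panin--Stavrova and discussed in Appendix~A) to identify $U$, up to \'etale excision, with a localization of $\AffSp^n_V$ at a finite set of closed points on the closed fiber.

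Concretely, I would start with a class $\alpha \in \E_Z(U)$ representing an element of $\E_{\Phi^{c+1}}(U)$, so that $\codim_U Z \geq c+1$, and spread $Z$ out to a closed subset $\widetilde Z \subset X$ of the same codimension, possibly after shrinking $X$ around $x$. Applying Theorem~\ref{geomPres} to $(X, \widetilde Z, x)$ then yields, after a further Zariski shrinking, an \'etale morphism $q\colon X \to \AffSp^n_V$ such that $q|_{\widetilde Z}$ is a closed immersion onto its image and $q^{-1}(q(\widetilde Z)) = \widetilde Z$. Set $y = q(x) \subset \AffSp^n_v$ and $U' = \spec(\mathcal{O}_{\AffSp^n_V, y})$. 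For every sufficiently small open $W \ni y$ in $\AffSp^n_V$, the preimage $V := q^{-1}(W)$ is then an \'etale neighborhood of $q(\widetilde Z) \cap W$ inside $W$, with distinguished lift $\widetilde Z \cap V$.

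\'Etale excision (property (2)) supplies the commutative square
\[
\begin{tikzcd}
\E_{q(\widetilde Z)\cap W}(W) \ar[r, "\operatorname{ext}"] \ar[d, "\sim"'] & \E_{\Phi^c}(W) \ar[d, "q^*"] \\
\E_{\widetilde Z \cap V}(V) \ar[r, "\operatorname{ext}"'] & \E_{\Phi^c}(V)
\end{tikzcd}
\]
in which the left vertical arrow is an isomorphism. Passing to the colimit over shrinking $W$ via continuity and Lemma~\ref{limitPhi} gives the analogous diagram for $(U', U)$; the assumed vanishing of $\operatorname{ext}\colon \E_{\Phi^{c+1}}(U') \to \E_{\Phi^c}(U')$, which is Lemma~\ref{zeroMaps} for $\AffSp^n_V$ at the points $y$, forces the top arrow to be zero. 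Since the left vertical is an isomorphism, the image of $\alpha$ in $\E_{\Phi^c}(U)$ vanishes as well.

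The main obstacle is the semi-local presentation lemma itself: producing an \'etale morphism to $\AffSp^n_V$ that simultaneously controls $\widetilde Z$ near several closed points in the mixed-characteristic DVR setting is the genuine content and is the reason Appendix~A is needed. Once that is granted, the reduction above is formal, using only properties (2) and (8) of $\E$ together with Lemma~\ref{limitPhi}.
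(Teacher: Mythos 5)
For $c \geqslant 1$ your argument is essentially the paper's: apply Theorem~\ref{geomPres}, transport the class across the elementary distinguished square by \'etale excision, use the assumed vanishing over $\AffSp^n_V$, and pull the enlarged support back along the \'etale map (where codimension is preserved), passing to the limit over shrinking neighborhoods. Two small points you should still make explicit: Theorem~\ref{geomPres} is stated for $Z$ of \emph{pure codimension $1$} with $\underline{x}\subset Z_v$, whereas your $\widetilde Z$ has codimension $\geqslant c+1\geqslant 2$ and need not contain $x$, so one must first enlarge $\widetilde Z$ to a divisor through $x$ containing no component of $X_v$ and then observe that the resulting square restricts to $\widetilde Z$.

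The genuine gap is the case $c=0$, which is within the scope of Lemma~\ref{zeroMaps} (it is exactly what the proof of Lemma~\ref{injectivity} needs from this reduction). There a support $Z\in\Phi^1$ may contain an irreducible component of the closed fiber $X_v$ (these components have codimension $1$), and Theorem~\ref{geomPres} explicitly excludes such $Z$; no enlargement or shrinking fixes this, so your spreading-out step simply cannot be performed. The paper deals with this by splitting the system of supports: for the subsystem $\widetilde{\Phi}^1$ of codimension-$\geqslant 1$ supports containing no component of the closed fiber, the presentation argument applies and shows $\E_{\widetilde{\Phi}^1}(U)\to\E(U)$ vanishes; this gives injectivity of $\E(U)\to\E(U_{(v)})$, where $U_{(v)}$ is the localization of $U$ at the generic points of $U_v$, and the remaining supports are handled by the injectivity of $\E(U_{(v)})\to\E(\eta)$ for the one-dimensional regular semilocal scheme $U_{(v)}$ (property (5), \cite[Appendix~B]{colliot1997bloch}). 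You need to add an argument of this kind (or restrict your claim to $c\geqslant 1$ and treat $c=0$ separately) for the reduction to be complete.
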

\begin{proof}
    We keep the notation of Lemma~\ref{zeroMaps}. Any element $a \in \E_{\Phi^{c+1}}(U)$ is the image of an element in $\E_Z(W)$, where $Z$ is a closed subset of codimension $\geqslant c+1$ and $W$ is an open affine neighborhood of $x$. Assume $Z$ contains no irreducible component of the closed fiber. Note that this holds automatically if $c \geqslant 1$. By Theorem~\ref{geomPres}, after replacing $Z$ with $Z \cap \mathcal{X}$, we obtain an elementary distinguished square:
    \[\begin{tikzcd}
        {\mathcal X-Z} & {\mathcal X} \\
        {\mathcal A-\tau(Z)} & {\mathcal A}
        \arrow[hook, from=1-1, to=1-2]
        \arrow[from=1-1, to=2-1]
        \arrow["\tau", from=1-2, to=2-2]
        \arrow[hook, from=2-1, to=2-2]
    \end{tikzcd}\]
    Clearly, $a$ is the image of an element in $\E_Z(\mathcal{X})$. By \'etale excision, it is also the image of some $\tilde{a} \in \E_{\tau(Z)}(\mathcal{A})$. By assumption, there exists a closed subset $Z' \subset \mathcal{A}$ such that $\tilde{a}$ vanishes after support extension to $Z'$. Hence, $a$ vanishes after support extension to $\tau^{-1}(Z')$. Since $\tau$ is \'etale, $\tau^{-1}(Z') \subset \mathcal{X}$ is closed of codimension $\geqslant c$.

     For $c = 0$, we show the map $\E(U) \to \E(U_{(v)})$ is injective, where $U_{(v)}$ denotes the localization of $U$ at the generic points of $U_v$. Indeed, $\E_{\widetilde{\Phi}^1}(U) \to \E(U)$ is zero, where $\widetilde{\Phi}^1$ is the system of supports of codimension $\geqslant 1$ that do not contain any irreducible component of the closed fiber. The scheme $U_{(v)}$ is 1-dimensional and regular, so $\E(U_{(v)}) \to \E(\eta)$ is also injective by \cite[Appendix B]{colliot1997bloch}.
\end{proof}

\begin{proof}[ of Lemma~\ref{injectivity}]
    We keep the notation of Lemma~\ref{injectivity}. The proof proceeds by induction on the relative dimension of $X$ over $V$. The case of relative dimension 0 is established in \cite[Appendix B]{colliot1997bloch}. Assume Lemma~\ref{injectivity} holds for all smooth affine $V$-schemes of relative dimension $< n$. By Corollary~\ref{reductionToAff}, it suffices to consider $X = \AffSp^n_V$.

    Any element of $\E(U)$ is the image of some $a \in \E(\AffSp^n_V - Y)$, where $Y \subset \AffSp^n_V$ is a closed subset disjoint from $x$. Suppose $a$ lies in the kernel of $\E(U) \to \E(\eta)$. Then there exists a closed subset $Z \subset \AffSp^n_V$ such that the image of $a$ in $\E(\AffSp^n_V - Y - Z)$ vanishes. By Lemma~\ref{classMod}, there is a closed subset $\widetilde{Y} \subset \AffSp^n_V$, fiberwise of codimension $\geqslant 2$, and an element $\tilde{a} \in \E(\AffSp^n_V - \widetilde{Y})$ representing the same class as $a$ in $\E(U)$. Furthermore, there exists a closed subset $\widetilde{Z} \subset \AffSp^n_V$ such that the image of $\tilde{a}$ in $\E(\AffSp^n_V - \widetilde{Y} - \widetilde{Z})$ vanishes, and $\widetilde{Z}$ can be chosen not to contain the closed fiber. Indeed, let $U_{(v)}$ denote the localization of $U$ at the generic points of $U_v$. Since $U_{(v)}$ is a 1-dimensional regular scheme, the map $\E(U_{(v)}) \to \E(\eta)$ is injective by \cite[Appendix B]{colliot1997bloch}. Corollary~\ref{injCor} then completes the proof.
\end{proof}

\begin{proof}[ of lemma \ref{zeroMaps}]
    We keep the notation of Lemma~\ref{zeroMaps}. The proof proceeds by induction on the relative dimension of $X$ over $V$. The case $c = 0$ (and accordingly relative dimension 0) is covered by Lemma~\ref{injectivity}. Hence, assume $c \geqslant 1$ and that Lemma~\ref{zeroMaps} holds for all smooth affine $V$-schemes of relative dimension $< n$. By Corollary~\ref{reductionToAff}, it suffices to consider $X = \AffSp^n_V$. To establish the lemma, we verify the following effaceability property:

    For any open neighborhood $W \subset X$ of $x$, closed subset $Z \subset X$ of codimension $\geqslant c+1$, and element $a \in \E_Z(W)$, there exist an open neighborhood $W' \subset W$ of $x$ and a closed subset $Z' \subset X$ of codimension $\geqslant c$ containing $Z$ such that the image of $a$ in $\E_{Z'}(W')$ vanishes.

    Let $W = \AffSp^n_V - Y$. We may assume no irreducible component of $Z$ is contained in $Y$. The excision isomorphism  
    \[\E_Z(\AffSp^n_V - Y) \cong \E_Z(\AffSp^n_V - (Y \cap Z))\]  
    applies. Since $Y \cap Z$ has codimension $\geqslant c + 2 \geqslant 3$ (hence fiberwise codimension $\geqslant 2$), Corollary~\ref{zeroMapCor} completes the proof.
\end{proof}

\begin{theorem}\label{geometricCase}
    Let $X$ be a smooth $V$-scheme, and let $y \in X$ be a point (possibly non-closed). Denote by $U = \spec(\mathcal{O}_{X, y})$ the local scheme at $y$. Let $\underline{\E}$ be the Zariski sheaf associated with $\E$. Then the following complex of Zariski sheaves on $X$ is exact:
    \[0 \rightarrow \underline{\E} \rightarrow \bigoplus_{x \in X^{(0)}} i_{x,*}\E_x(X) \rightarrow \bigoplus_{x \in X^{(1)}} i_{x,*}\E_x(X) \rightarrow \dotsb \tag{$*$}\]
    In particular, the complex of stalks at $y$ is exact:
    \[0 \rightarrow \E(U) \rightarrow \bigoplus_{x \in U^{(0)}} \E_x(U) \rightarrow \bigoplus_{x \in U^{(1)}} \E_x(U) \rightarrow \dotsb\]
\end{theorem}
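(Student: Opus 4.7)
Both formulations of the theorem are equivalent: exactness of a complex of Zariski sheaves can be checked stalkwise, and for every $y\in X$ the stalk of $(*)$ at $y$ agrees canonically with the Cousin complex on $U=\spec\mathcal O_{X,y}$ displayed in the ``in particular'' clause. Indeed, $(i_{x,*}\E_x(X))_y=\E_x(X)$ exactly when $y\in\overline{\{x\}}$, i.e.\ when $x$ is a point of $U$, and vanishes otherwise; moreover $\E_x(X)=\E_x(U)$ by the colimit definition together with property~(8). So it suffices to prove, for every $y\in X$, that the displayed complex
\[0\to\E(U)\to\bigoplus_{x\in U^{(0)}}\E_x(U)\to\bigoplus_{x\in U^{(1)}}\E_x(U)\to\cdots\]
is exact.

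By the remark following the listing of properties in Section~\ref{preliminaries}, this exactness is implied by the vanishing of the support-extension maps $\operatorname{ext}\colon\E_{\Phi^{c+1}}(U)\to\E_{\Phi^c}(U)$ for all $c\geq 0$: the vanishing in codimensions $\geq 1$ produces $H^i=0$ for $i\geq 1$, while the case $c=0$, together with the identification $\ker\!\bigl(\bigoplus_{x\in U^{(0)}}\E_x(U)\to\bigoplus_{x\in U^{(1)}}\E_x(U)\bigr)=\E(U)$ that follows formally from the localization sequence, gives exactness at the initial term. When $y$ is a closed point of $X_v$ (after shrinking $X$ to an affine open containing $y$), this vanishing is exactly Lemma~\ref{zeroMaps} applied with $x=\{y\}$.

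For an arbitrary point $y$, I plan to reduce to the closed-point case through a limit argument. Choose a closed point $\bar y\in X$ that is a specialization of $y$, arranging $\bar y\in X_v$ whenever possible (in particular whenever $y\in X_v$). Then $U$ is the cofiltered inverse limit of the affine open subschemes $W\subset U_{\bar y}=\spec\mathcal O_{X,\bar y}$ containing $y$, so Lemma~\ref{limitPhi} identifies $\E_{\Phi^c}(U)=\varinjlim_W\E_{\Phi^c}(W)$ and exhibits the support-extension map on $U$ as the filtered colimit of those on $W$. By Lemma~\ref{zeroMaps} the map on $U_{\bar y}$ already vanishes, and combining this with the functoriality of $\operatorname{ext}$ under open restriction and with the localization long exact sequence attached to the open immersion $W\hookrightarrow U_{\bar y}$ forces the map on each sufficiently small $W$, hence on $U$, to vanish as well. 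The corner case $\bar y\in X_\theta$ (which arises when $y$ lies in the generic fiber and has no specialization in $X_v$) is handled by the same strategy with $\spec K$ replacing $V$; when $\mathcal O_{X,\bar y}$ is $1$-dimensional this reduces immediately to property~5, and in higher dimensions to the equicharacteristic Bloch--Ogus theorem on the smooth $K$-scheme $X_\theta$.

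The main anticipated obstacle is the colimit-transfer step: a given class in $\E_{\Phi^{c+1}}(W)$ may fail to lift to $U_{\bar y}$, and one has to use the localization long exact sequence to express the obstruction as a class supported on the complement $U_{\bar y}\setminus W$, and then argue that this obstruction is killed after restricting to a smaller open $W'\ni y$ or after passing to the colimit in the definition of $\E_{\Phi^c}(U)$. Once that technicality is handled, the theorem follows by combining Lemma~\ref{zeroMaps} with the observation recorded at the end of Section~\ref{preliminaries}.
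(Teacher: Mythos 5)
Your reduction to stalkwise exactness, the identification of the stalk of $(*)$ at $y$ with the Cousin complex of $U=\spec(\mathcal O_{X,y})$, and the two extreme cases --- closed points of $X_v$ via Lemma~\ref{zeroMaps}, and points of the generic fiber via the equicharacteristic Bloch--Ogus--Gabber theorem on the smooth $K$-scheme $X_\theta$ --- all match the paper. The genuine gap is your treatment of the remaining points, i.e.\ the non-closed points of $X_v$. The ``colimit-transfer step'' you flag is not a technicality but the entire difficulty, and the mechanism you sketch does not work. The vanishing of $\operatorname{ext}\colon\E_{\Phi^{c+1}}(U_{\bar y})\to\E_{\Phi^{c}}(U_{\bar y})$ concerns classes defined on neighborhoods of the closed point $\bar y$, and it is not inherited by the opens $W\subset U_{\bar y}$ that contain $y$ but not $\bar y$: a class in $\E_{Z}(W)$ need not lift to $\E_{\overline Z}(U_{\bar y})$, the obstruction lies in $\E_{\overline Z\setminus W}(U_{\bar y})$ by the localization sequence, and there is no reason for it to die after shrinking $W$ around $y$ (shrinking $W$ does not change $U_{\bar y}$, and the complement $\overline Z\setminus W$ only grows). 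Moreover, since $U_{\bar y}$ is local and not of finite type over $k$, you cannot re-choose a fiberwise-closed specialization of $y$ inside such a $W$, so the effaceability statement underlying Lemma~\ref{zeroMaps} is simply not applicable to these classes. As written, the argument for non-closed points of $X_v$ is missing.

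The paper disposes of all these points at once by a different device: having established exactness of $(*)$ at every point that is closed in its fiber over $V$ (closed points of $X_v$ by Lemma~\ref{zeroMaps}, closed points of $X_\theta$ by Bloch--Ogus--Gabber), it invokes the general lemma that a complex of Zariski sheaves on a scheme of finite type over a base which is exact at all fiberwise-closed points is exact --- a Jacobson-type density statement applied to the supports of sections of the homology sheaves. If you want to keep your specialization idea, the correct repair is to work with opens of $X$ rather than of $U_{\bar y}$: given a class defined on an open $W\subset X$ containing $y\in X_v$, note that $\overline{\{y\}}\cap W\cap X_v$ is a nonempty finite-type $k$-scheme and hence contains a closed point $\bar y'$ of $X_v$; apply the effaceability property of Lemma~\ref{zeroMaps} at $\bar y'$, and observe that the resulting smaller neighborhood of $\bar y'$ automatically contains its generization $y$. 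But carrying this out is essentially a hands-on proof of the density lemma the paper cites, so you should either prove that lemma or make this substitute argument explicit.
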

\begin{proof}
    As noted in Section~\ref{preliminaries}, Lemma~\ref{zeroMaps} implies the exactness of $(*)$ at closed points of $X_v$. The exactness at points of $X_\theta$ follows from the Bloch–Ogus–Gabber theorem \cite[Prop. 2.1.2, Th. 2.2.1]{colliot1997bloch}. The following easy lemma then completes the proof.
\end{proof}

\begin{lemma}
    Let $X \to Y$ be a morphism of finite type, and let $\mathcal{F}_\bullet$ be a complex of abelian Zariski sheaves on $X$. If $\mathcal{F}_\bullet$ is exact at points of $X$ that are closed in their fibers over $Y$, then $\mathcal{F}_\bullet$ is exact.
\end{lemma}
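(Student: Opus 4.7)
My plan is to reduce exactness of the complex $\mathcal{F}_\bullet$ to the vanishing of its cohomology sheaves $\mathcal{H}^i(\mathcal{F}_\bullet)$ at every stalk, and then propagate the hypothesised vanishing from closed-in-fiber points to all points of $X$ via a Jacobson density argument inside each fiber.

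The first step is formal: exactness of a complex of sheaves of abelian groups is equivalent to the vanishing of its cohomology sheaves, and a sheaf of abelian groups vanishes iff all of its stalks vanish. Hence it suffices to show $\mathcal{H}^i(\mathcal{F}_\bullet)_x = 0$ for every $x \in X$.

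So I fix $x \in X$ and a germ $s \in \mathcal{H}^i(\mathcal{F}_\bullet)_x$, represented by a section $\tilde s$ on some open $U \ni x$. Let $y$ denote the image of $x$ in $Y$. Because $X \to Y$ is of finite type, the fiber $X_y$ is of finite type over the field $k(y)$, hence Jacobson; in particular, closed points of $X_y$ are dense in every nonempty closed subscheme of $X_y$. Apply this to the irreducible closed subscheme $\overline{\{x\}} \cap X_y$, whose generic point is $x$: its nonempty open $U \cap \overline{\{x\}} \cap X_y$ must contain a point $x'$ that is closed in $\overline{\{x\}} \cap X_y$, hence closed in $X_y$, i.e.\ closed in its fiber over $Y$.

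By hypothesis, $\mathcal{H}^i(\mathcal{F}_\bullet)_{x'} = 0$, so I may shrink to an open $V$ with $x' \in V \subseteq U$ and $\tilde s|_V = 0$. The crucial observation is that any open neighborhood of a specialization also contains the generization: $V$ is open, meets the irreducible closed set $\overline{\{x\}}$ (at $x'$), and therefore contains its generic point $x$. Thus $x \in V$, which forces $\tilde s_x = 0$, hence $s = 0$, giving the required vanishing.

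There is no real obstacle in the argument; the only points requiring care are to distinguish ``closed in $X$'' from ``closed in the fiber $X_y$'' (the hypothesis uses the latter), and to apply Jacobson's theorem inside the correct closed subscheme of the fiber.
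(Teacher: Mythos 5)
The paper gives no proof of this lemma at all (it is dismissed as ``easy''), so there is nothing to compare against; judged on its own, your argument is correct and complete. The two key points are exactly right: you choose the fiberwise-closed specialization $x'$ \emph{inside} the open set $U$ on which the germ is represented (using that $\overline{\{x\}}\cap X_y$ is the closure of $x$ in the Jacobson space $X_y$, hence irreducible with generic point $x$ and with closed points of $X_y$ dense in it), and you then use that any open set containing the specialization $x'$ must contain the generization $x$, so the vanishing of $\tilde s$ near $x'$ forces $\tilde s_x=0$. This is the standard argument and fills the gap the paper leaves to the reader.
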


\section{The general case}\label{PaninsMethod}

In this section, by an essentially $V$-smooth scheme, we mean a localization of an affine $V$-smooth scheme. Recall the definition of a geometrically regular morphism:

\begin{definition}\label{geom-reg}
    A morphism of schemes $f \colon X \to Y$ is called geometrically regular if it is flat with locally Noetherian fibers such that for any point $y \in Y$ and any finitely generated field extension $L/k(y)$, the fiber over $\spec(L)$ is regular. In this case, we say $X$ is geometrically regular over $Y$.
\end{definition}

Note that such morphisms are often called simply "regular" in the literature, while the term "geometrically regular" is typically reserved for schemes over fields. To avoid confusion with the abstract notion of a regular scheme, we use "geometrically regular" for all such morphisms.

\begin{lemma}\label{GysBoundaryCommute}
    Let $i_{Y,X}\colon Y \to X$ and $i_{Z,Y}\colon Z \to Y$ be closed embeddings of schemes. Assume that $Y$ is of pure codimension $c$ and that purity holds for the pair $(Y, X)$ and a sheaf $\mathcal{F}$ on $X_{\et}$, i.e., $R^{2c}i_{Y,X}^!\mathcal{F} \cong \mathcal{F}(-c)$. Then there is a natural isomorphism  
    \[\operatorname{gys}\colon \operatorname{H}^{*-2c}_Z(Y, \mathcal{F}(-c)) \to \operatorname{H}^*_Z(X, \mathcal{F})\]  
    that makes the following diagram commutative.
    \[\begin{tikzcd}
        {\operatorname{H}^{*}_{Y-Z}(X-Z, \mathcal F)} & {\operatorname{H}^{*+1}_{Z}(X, \mathcal F)} & {\operatorname{H}^{*+1}_{Y}(X, \mathcal F)} \\
        {\operatorname{H}^{*-2c}_{\et}(Y-Z, \mathcal F(-c))} & {\operatorname{H}^{*+1-2c}_Z(Y, \mathcal F(-c))} & {\operatorname{H}^{*+1-2c}_{\et}(Y, \mathcal F(-c))}
        \arrow["\partial", shorten <=3pt, shorten >=3pt, from=1-1, to=1-2]
        \arrow[shorten <=4pt, shorten >=4pt, from=1-2, to=1-3]
        \arrow["{\operatorname{gys}}"', from=2-1, to=1-1]
        \arrow["\simeq"{marking, allow upside down}, shift left=2, draw=none, from=2-1, to=1-1]
        \arrow["\partial", from=2-1, to=2-2]
        \arrow["{\operatorname{gys}}"', from=2-2, to=1-2]
        \arrow["\simeq"{marking, allow upside down}, shift left=2, draw=none, from=2-2, to=1-2]
        \arrow[from=2-2, to=2-3]
        \arrow["{\operatorname{gys}}"', from=2-3, to=1-3]
        \arrow["\simeq"{marking, allow upside down}, shift left=2, draw=none, from=2-3, to=1-3]
    \end{tikzcd}\]
\end{lemma}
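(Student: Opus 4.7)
The plan is to upgrade the purity hypothesis from a statement about a single cohomology sheaf to an isomorphism in the derived category, and then deduce the commutativity from the fact that the Gysin map comes from a morphism of distinguished triangles.

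First, recall that for any closed embedding $i\colon Y \to X$ and any closed subset $W \subset Y$, pushforward along $i$ induces an equivalence between \'etale sheaves on $Y$ and \'etale sheaves on $X$ set-theoretically supported on $Y$, and this yields a canonical isomorphism $R\Gamma_W(X, \mathcal{F}) \cong R\Gamma_W(Y, Ri_{Y,X}^! \mathcal{F})$ in $D^+(\mathrm{Ab})$. This isomorphism is functorial in $\mathcal{F}$ and compatible with restriction along open immersions $U \hookrightarrow X$ containing $W$. In the usual absolute purity setup, the hypothesis $R^{2c} i_{Y,X}^! \mathcal{F} \cong \mathcal{F}(-c)$ is accompanied by the vanishing of $R^j i_{Y,X}^! \mathcal{F}$ for $j \neq 2c$, giving a quasi-isomorphism $Ri_{Y,X}^! \mathcal{F} \cong \mathcal{F}(-c)[-2c]$ in $D^+(Y_{\et})$. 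Combining these two identifications yields the Gysin isomorphism
\[\operatorname{gys}\colon \operatorname{H}^{*-2c}_W(Y, \mathcal{F}(-c)) \xrightarrow{\simeq} \operatorname{H}^*_W(X, \mathcal{F})\]
for every closed subset $W \subset Y$, and in particular for $W = Z$ and $W = Y$.

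Next, the top row of the diagram is the long exact sequence in cohomology attached to the distinguished triangle
\[R\Gamma_Z(X, \mathcal{F}) \to R\Gamma_Y(X, \mathcal{F}) \to R\Gamma_{Y-Z}(X-Z, \mathcal{F}) \xrightarrow{+1}\]
in $D^+(\mathrm{Ab})$. Applying the identification above to each of the three terms --- with $W = Z$, $W = Y$, and (after restriction to the open $X - Z$ with closed subscheme $Y - Z$) $W = Y - Z$ --- transports this triangle into the derived analogue of the bottom row, with $\mathcal{F}(-c)[-2c]$ in place of $Ri_{Y,X}^! \mathcal{F}$. Taking cohomology, this gives exactly the claimed diagram, the vertical Gysin isomorphisms intertwining the two long exact sequences.

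The only technical point is to check that the canonical identification $R\Gamma_W(X, \mathcal{F}) \cong R\Gamma_W(Y, Ri_{Y,X}^! \mathcal{F})$ is compatible with the open restriction $X - Z \hookrightarrow X$ and the parallel restriction on $Y$, so that the left column of the diagram really is induced by pulling back $\mathcal{F}$ to $X - Z$. This compatibility is a standard property of the derived $i^!$ functor for closed immersions; once it is in place, the commutativity of the diagram is formal, since it is simply the cohomology of a morphism of distinguished triangles.
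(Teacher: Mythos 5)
Your proposal is correct and takes essentially the same route as the paper: the paper constructs $\operatorname{gys}$ from the hypercohomology spectral sequence of $Ri^!_{Y,X}$, which degenerates precisely because $Ri^!_{Y,X}\mathcal{F}$ is concentrated in degree $2c$ — equivalent to your quasi-isomorphism $Ri^!_{Y,X}\mathcal{F}\simeq\mathcal{F}(-c)[-2c]$ — and then checks $Ri^!_{Y,X}\circ\partial^Y=\partial^X$, which is exactly the compatibility of the identification $R\Gamma_W(X,\mathcal{F})\cong R\Gamma_W(Y,Ri^!_{Y,X}\mathcal{F})$ with the localization triangle that you isolate as the one technical point. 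Both arguments also silently strengthen the stated hypothesis to full purity (vanishing of $R^ji^!_{Y,X}\mathcal{F}$ for $j\neq 2c$), and both leave the same routine verification to the reader.
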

\begin{proof}
    Recall that for a closed embedding $i\colon Z \to X$, the functor $i^!$ preserves injectives. This yields a spectral sequence:
    \[E^{p,q} = \operatorname{H}^p_{\et}(X, R^qi^!\mathcal{F}) \Rightarrow \operatorname{H}^{p+q}_Z(X, \mathcal{F})\]
    When purity holds for the pair $(Z, X)$ and a sheaf $\mathcal{F}$, this spectral sequence degenerates at the $E_2$-page, defining a Gysin isomorphism:
    \[\operatorname{H}^p_{\text{\'et}}(X, R^{2c}i^!\mathcal{F}) \xrightarrow{\simeq} \operatorname{H}^{p+2c}_Z(X, \mathcal{F})\]
    Consider the diagram with commutative top and bottom triangles:
    \[\begin{tikzcd}[column sep=large]
        {D(X-Z)} & {D(Y-Z)} & {D(\mathbf{Ab})} \\
        {D(X)} & {D(Y)} & {D(\mathbf{Ab})}
        \arrow["{Ri^!_{Y\!-\!Z,X\!-\!Z}}"', from=1-1, to=1-2]
        \arrow["{R\Gamma^{X-Z}_{Y-Z}}", curve={height=-12pt}, from=1-1, to=1-3]
        \arrow["{R\Gamma_{Y-Z}}"', from=1-2, to=1-3]
        \arrow["{j_{Z,X}^*}"', from=2-1, to=1-1]
        \arrow["{Ri^!_{Y,X}}", from=2-1, to=2-2]
        \arrow["{R\Gamma^X_Z[1]}"', curve={height=12pt}, from=2-1, to=2-3]
        \arrow["{j_{Z,Y}^*}"', from=2-2, to=1-2]
        \arrow["{R\Gamma^Y_Z[1]}", from=2-2, to=2-3]
        \arrow["{\operatorname{id}}"', from=2-3, to=1-3]
    \end{tikzcd}\]
    There are natural transformations corresponding to the boundary maps in the localization sequence:
    \[\partial^Y \colon R\Gamma_{Y-Z} \circ j^*_{Z,Y} \Rightarrow R\Gamma^Y_Z[1] \quad \text{and} \quad \partial^X \colon R\Gamma^{X-Z}_{Y-Z} \circ j^*_{Z,X} \Rightarrow R\Gamma^X_Z[1]\]
    One checks easily that the left square in the diagram is commutative and $Ri^!_{Y,X} \circ \partial^Y = \partial^X$ (details omitted). Hence, there is a morphism of spectral sequences:
    \[\begin{tikzcd}
        {\operatorname{H}_{\et}^p(Y\!-Z, R^qi^!\mathcal F)} &[-5mm] {\operatorname{H}^{p+q}_{Y\!-Z}(X\!-\!Z, \mathcal F)} \\
        {\operatorname{H}_{Z}^{p+1}(Y, R^qi^!\mathcal F)} & {\operatorname{H}^{p+1+q}_Z(X, \mathcal F)}
        \arrow[Rightarrow, from=1-1, to=1-2]
        \arrow["{\partial^Y}", from=1-1, to=2-1]
        \arrow["{\partial^X}", from=1-2, to=2-2]
        \arrow[shorten <=6pt, shorten >=6pt, Rightarrow, from=2-1, to=2-2]
    \end{tikzcd}\]
    We conclude that the left square in the lemma is commutative. The commutativity of the right square is proven analogously. We leave this verification to the reader.
\end{proof}

Denote by $\operatorname{Cous}(X)$ the complex of Zariski sheaves on $X$ associated with the Cousin complex, and by $\operatorname{\Gamma Cous}(X)$ the corresponding complex of global sections, i.e., the Cousin complex:
\[\operatorname{Cous}(X) = \Big[ 0 \to \bigoplus_{x \in X^{(0)}} i_{x,*}\E_x(X) \to \bigoplus_{x \in X^{(1)}} i_{x,*}\E_x(X) \to \dotsb \Big].\]
We say that the Cousin complex resolves $\E$ locally on $X$ if $\operatorname{Cous}(X)$ is a resolution of the Zariski sheaf $\underline{\E}$ associated with the presheaf $U \mapsto \E(U)$. Theorem~\ref{bo-th} states that the Cousin complex resolves $\E$ locally on $\spec(S)$, where $\spec(S)$ is a geometrically regular over $V$ regular scheme.

Let $U = \spec(S)$ be a local regular $V$-scheme geometrically regular over $V$ of dimension $\geq 2$. In particular, $\pi$ is a local parameter of $S$. Let $f \in S$ be a local parameter such that $(\pi, f)$ forms a regular sequence. Let $U_f = \spec(S_f)$ and let $Z \subset U$ be the vanishing locus of $f$. Then $U_f$ and $Z$ are also regular and geometrically regular over $V$. If $U$ is essentially smooth over $V$, then $U_f$ and $Z$ are also essentially smooth over $V$.

The inclusion $U_f \to U$ induces a surjective morphism $\operatorname{\Gamma Cous}(U) \to \operatorname{\Gamma Cous}(U_f)$. By Lemma~\ref{GysBoundaryCommute} and Gabber's absolute purity theorem, the kernel is naturally isomorphic to $\operatorname{\Gamma Cous}(Z)[-1]$. This yields a short exact sequence:
\[0 \to \operatorname{\Gamma Cous}(Z)[-1] \to \operatorname{\Gamma Cous}(U) \to \operatorname{\Gamma Cous}(U_f) \to 0 \tag{$\star$}\]

Assume that the Cousin complex resolves $\E$ locally on $Z$ and $U_f$. Then $\operatorname{Cous}(U_f)$ is a flasque resolution of the sheaf $\underline{\E}$ on $U_f$, and $\operatorname{\Gamma Cous}(Z)$ is a resolution of $\E(Z)$. The long exact sequence of cohomology associated with $(\star)$ takes the form:
\[0 \to H^0(\operatorname{\Gamma Cous}(U)) \to \underline{\E}(U_f) \xrightarrow{d} \E(Z) \to H^1(\operatorname{\Gamma Cous}(U)) \to \mathrm{H}^1_{\Zar}(U_f, \underline{\E}) \to 0\]
\[0 \to H^i(\operatorname{\Gamma Cous}(U)) \to \mathrm{H}^i_{\Zar}(U_f, \underline{\E}) \to 0 \quad i > 1\]

\begin{lemma}\label{boundaryNat}
    The following diagram is commutative:
    \[\begin{tikzcd}
        {\underline\E(U_f)} & {\E(Z)} \\
        {\E(U_f)} & {\E_Z(U)}
        \arrow["d", from=1-1, to=1-2]
        \arrow["{\operatorname{can}}", from=2-1, to=1-1]
        \arrow["\partial", from=2-1, to=2-2]
        \arrow["{\operatorname{gys}^{-1}}"', from=2-2, to=1-2]
        \arrow["\simeq"{marking, allow upside down}, shift left=3, draw=none, from=2-2, to=1-2]
    \end{tikzcd}\]
    Here, $d$ is the boundary map in the long exact sequence associated with $(\star)$, and $\partial$ is the boundary map in the localization sequence.
\end{lemma}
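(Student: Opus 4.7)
The plan is to trace an element $\alpha \in \E(U_f)$ around both sides of the square, unwind the snake-lemma description of $d$ coming from $(\star)$, and match it componentwise with $\operatorname{gys}^{-1} \circ \partial$.

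First I would describe $d(\operatorname{can}(\alpha))$ explicitly. Since $\operatorname{Cous}(U_f)$ is a flasque resolution of $\underline{\E}$ on $U_f$, the section $\operatorname{can}(\alpha) \in \underline{\E}(U_f)$ lives in $\operatorname{\Gamma Cous}(U_f)^0 = \E(\eta)$ as a cocycle; a lift to $\operatorname{\Gamma Cous}(U)^0 = \E(\eta)$ is tautological. Applying the Cousin differential of $U$ produces a class in $\bigoplus_{x \in U^{(1)}} \E_x(U)$ that vanishes in $\operatorname{\Gamma Cous}(U_f)^1$, so it is concentrated at the unique codimension-one point of $U$ outside $U_f$, namely the generic point $\eta_Z$ of $Z$. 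By the construction of $(\star)$ via the Gysin isomorphism, $d(\operatorname{can}(\alpha))$ is then the image of this class under $\operatorname{gys}^{-1} \colon \E_{\eta_Z}(U) \xrightarrow{\simeq} \E(\eta_Z)$, viewed as an element of $\E(Z) \subseteq \E(\eta_Z)$.

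Second, I would identify the $\eta_Z$-component of the Cousin differential with the localization boundary. By the description of Cousin differentials as colimits of boundary maps of triples $(U - Z^c, U - Z^{c+1}, U - Z^{c+2})$, this component sends $\alpha \in \E(U_f) \subseteq \E(\eta)$ to the image of $\partial \alpha \in \E_Z(U)$ under the canonical restriction of supports $\E_Z(U) \to \E_{\eta_Z}(U) = \varinjlim_{Z^2 \subsetneq Z} \E_{Z - Z^2}(U)$. By naturality of Gysin under open restriction, which is a formal consequence of Lemma~\ref{GysBoundaryCommute} combined with étale excision, the full Gysin $\operatorname{gys}^{-1} \colon \E_Z(U) \to \E(Z)$ is compatible with the partial Gysin $\E_{\eta_Z}(U) \cong \E(\eta_Z)$ used to define $(\star)$. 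Consequently, the two paths $d \circ \operatorname{can}$ and $\operatorname{gys}^{-1} \circ \partial$ produce the same element in $\E(\eta_Z)$; since $\E(Z) \hookrightarrow \E(\eta_Z)$ (because $\operatorname{\Gamma Cous}(Z)$ is a resolution of $\underline{\E}$ on $Z$, so $\E(Z) = H^0(\operatorname{\Gamma Cous}(Z))$ sits inside $\operatorname{\Gamma Cous}(Z)^0 = \E(\eta_Z)$), they also agree in $\E(Z)$.

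The main subtlety is the compatibility between the two Gysin identifications — the one $\E_Z(U) \cong \E(Z)$ on the right of the diagram, and the one $\E_{\eta_Z}(U) \cong \E(\eta_Z)$ implicit in the SES $(\star)$. This is exactly the naturality of purity under open restriction to neighborhoods of $\eta_Z$ in $U$, and it is ensured by Lemma~\ref{GysBoundaryCommute}, which was the very ingredient used to construct $(\star)$. The remaining verifications are a routine unwinding of the snake lemma.
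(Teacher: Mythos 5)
Your argument is correct, and it is worth noting that the paper does not actually prove this lemma: the proof is omitted with a reference to \cite[Lemma~1.7]{PanEqui} and the one-line remark that the differentials in the Cousin complex arise from boundary maps in localization sequences. Your diagram chase supplies exactly the content behind that remark: since $Z=V(f)$ is irreducible with generic point $\eta_Z$ the unique codimension-one point of $U$ outside $U_f$, the snake-lemma representative of $d(\operatorname{can}(\alpha))$ is the $\eta_Z$-component of the Cousin differential applied to $\alpha|_\eta$, which by functoriality of the localization sequence for the open immersion $(U-Z^2,\,U-Z)\hookrightarrow(U,\,U-Z)$ is the image of $\partial\alpha\in\E_Z(U)$ in $\E_{\eta_Z}(U)$; the two Gysin identifications are then matched by naturality of purity, and injectivity of $\E(Z)\to\E(\eta_Z)$ (valid under the standing assumption that Cousin resolves $\E$ on $Z$) upgrades equality in $\E(\eta_Z)$ to equality in $\E(Z)$. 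Two small points: the colimit describing $\E_{\eta_Z}(U)$ should read $\varinjlim_{Z^2}\E_{Z-Z^2}(U-Z^2)$ rather than $\E_{Z-Z^2}(U)$, since $Z-Z^2$ is not closed in $U$; and the compatibility of the two Gysin maps needs only functoriality of the purity spectral sequence for open immersions of pairs (i.e., the ``natural'' in Lemma~\ref{GysBoundaryCommute}), not \'etale excision. As usual, the commutativity is to be understood up to the sign conventions built into the connecting homomorphism, which is all the subsequent arguments require.
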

\begin{proof} 
    Omitted, cf. \cite[Lemma~1.7]{PanEqui}. This lemma reflects the fact that differentials in the Cousin complex arise from boundary maps in localization sequences.
\end{proof}

\begin{lemma}\label{cohData}
    Assume that the Cousin complex resolves $\E$ locally on $U_f$ and $Z$. Then the following are equivalent:
    \begin{enumerate}
        \item The Cousin complex resolves $\E$ locally on $U$.
        \item The canonical map $\E(U) \to \E(U_f)$ is injective, the canonical map $\E(U_f) \to \underline{\E}(U_f)$ is an isomorphism, and $\mathrm{H}^i_{\Zar}(U_f, \underline{\E}) = 0$ for $i > 0$.
    \end{enumerate}
\end{lemma}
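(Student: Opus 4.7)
The plan is to play the long exact cohomology sequence of $(\star)$ against the localization sequence of the triple $(U, U_f, Z)$, with Lemma~\ref{boundaryNat} as the bridge. Since Cousin resolves $\E$ locally on $U_f$ and on $Z$, and since $Z$ is local so $H^i_\Zar(Z, \underline{\E}) = 0$ for $i > 0$, the LES of $(\star)$ simplifies exactly to the one displayed in the paper: $H^0(\operatorname{\Gamma Cous}(U)) = \ker d$, the five-term piece $\underline{\E}(U_f) \xrightarrow{d} \E(Z) \to H^1(\operatorname{\Gamma Cous}(U)) \to H^1_\Zar(U_f, \underline{\E}) \to 0$, and $H^i(\operatorname{\Gamma Cous}(U)) \cong H^i_\Zar(U_f, \underline{\E})$ for $i \geq 2$. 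The key auxiliary object is the commutative diagram whose top row is the localization piece $\E(U) \to \E(U_f) \xrightarrow{\partial} \E_Z(U)$ and whose bottom row is $H^0(\operatorname{\Gamma Cous}(U)) \to \underline{\E}(U_f) \xrightarrow{d} \E(Z)$, with vertical maps the canonical one $\E(U) \to H^0(\operatorname{\Gamma Cous}(U))$, $\operatorname{can}$, and $\gys^{-1}$; the right square commutes by Lemma~\ref{boundaryNat} and its right vertical is an isomorphism by purity.

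For $(2) \Rightarrow (1)$ I would argue as follows. Condition (c) gives $H^i(\operatorname{\Gamma Cous}(U)) = 0$ for $i \geq 2$ directly from the isomorphism above. Condition (a), applied in each graded piece and in particular in the next cohomological degree, kills the cokernel of $\partial$: the cokernel equals $\ker(\E(U) \to \E(U_f))$ in that shifted degree. Hence the top row is a short exact sequence $0 \to \E(U) \to \E(U_f) \xrightarrow{\partial} \E_Z(U) \to 0$. Condition (b) makes the middle vertical an isomorphism, and commutativity with $\gys^{-1}$ then forces $d$ to be surjective with $\ker d = \operatorname{can}(\ker \partial) \cong \E(U)$, so $H^0(\operatorname{\Gamma Cous}(U)) \cong \E(U)$ and $H^1(\operatorname{\Gamma Cous}(U)) = \operatorname{coker}(d) = 0$.

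For $(1) \Rightarrow (2)$ I would proceed in three moves. First, (c) drops out of the LES: $H^i_\Zar(U_f, \underline{\E}) \cong H^i(\operatorname{\Gamma Cous}(U)) = 0$ for $i \geq 2$, and $H^1(\operatorname{\Gamma Cous}(U)) = 0$ combined with the five-term sequence gives both $H^1_\Zar(U_f, \underline{\E}) = 0$ and surjectivity of $d$. Second, the augmentation isomorphism $\E(U) \cong H^0(\operatorname{\Gamma Cous}(U)) = \ker d \subseteq \underline{\E}(U_f)$ factors as $\E(U) \to \E(U_f) \xrightarrow{\operatorname{can}} \underline{\E}(U_f)$, so the first map is injective and (a) holds. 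Third, with (a) in hand the top localization row is short exact as before, the bottom row is short exact since $d$ is surjective, and a five-lemma on this commutative diagram with outer vertical isomorphisms $\E(U) \cong H^0(\operatorname{\Gamma Cous}(U))$ and $\E_Z(U) \cong \E(Z)$ promotes $\operatorname{can}$ to an isomorphism, establishing (b).

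The delicate step is this last one: (b) cannot be read off directly from the cohomology of $\operatorname{\Gamma Cous}(U)$ in isolation. One must first extract (a) from the augmentation isomorphism, then use (a) to truncate the localization sequence to a short exact sequence, and only after that invoke the five-lemma via the square of Lemma~\ref{boundaryNat} to promote $\operatorname{can}$.
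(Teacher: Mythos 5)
Your proposal is correct and follows essentially the same route as the paper: the long exact sequence of $(\star)$, the localization sequence of $(U, U_f, Z)$, and Lemma~\ref{boundaryNat} linking the two, with the injectivity of $\E(U) \to \E(U_f)$ extracted from the factorization of the augmentation through the generic point. The paper leaves these diagram chases to the reader, and your write-up supplies them accurately, including the correct order of deductions in the $(1)\Rightarrow(2)$ direction.
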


\begin{proof} 
    Note that if $\E(U) \to H^0(\operatorname{\Gamma Cous}(U))$ is an isomorphism, then $\E(U) \to \E(U_f)$ is injective since $\E(U) \to \E_{\eta}(U) = \E(\eta)$ is injective. Combine the long exact sequence of cohomology associated with $(\star)$, Lemma~\ref{boundaryNat}, and the localization sequence to complete the proof.
\end{proof}

The idea is that we can carry this information across the limit to make the induction step. Recall Popescu's theorem:

\begin{theorem}[D. Popescu, {\cite[Theorem 2.5]{popescu1986general}}]
    If a Noetherian ring $A'$ is geometrically regular over $A$, then it is a filtered inductive limit of finite type smooth $A$-algebras.
\end{theorem}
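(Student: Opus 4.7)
The plan is to prove the theorem by N\'eron--Popescu desingularization. The formulation above is equivalent to the following: for every finite-type $A$-algebra $B$ equipped with a homomorphism $B \to A'$, there exists a factorization $B \to C \to A'$ with $C$ a smooth $A$-algebra of finite type. Indeed, granting this, the smooth algebras $C$ appearing over all finite-type $B \subset A'$ form a filtered system whose colimit is $A'$.

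The central technical tool is the Jacobian criterion: a finite-type $A$-algebra presented as $B = A[x_1,\dots,x_n]/(f_1,\dots,f_m)$ is smooth over $A$ at a prime $\mathfrak{q}$ iff a suitable minor of the Jacobian matrix does not lie in $\mathfrak{q}$. The key consequence of the geometric-regularity hypothesis is that for every such $B \subset A'$, the image in $A'$ of the Jacobian ideal $H_{B/A}$ generates the unit ideal, so that the singular locus of $\spec(B) \to \spec(A)$ does not meet the image of $\spec(A')$. This is where the infinitary regularity of $A'$ gets converted into finitary data living on $B$.

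Given this, one performs a sequence of Popescu's \emph{standard smoothings}: each step replaces $B$ by a larger finitely presented $A$-subalgebra $B' \subset A'$ whose singular locus is strictly smaller than that of $B$, while preserving the factorization through $A'$. A suitable numerical invariant of the singular locus (such as the length of an appropriate Jacobian module) strictly decreases at each step, forcing termination after finitely many iterations, at which point we obtain the required smooth $C$.

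The main obstacle is the construction of a single smoothing step. For $A$ a DVR this is N\'eron's classical desingularization; the general geometrically regular case, due to Popescu, is far more delicate, combining Artin-type approximation with an intricate analysis of standard presentations and a careful descent from $A'$ back to finite-type algebras. The full argument occupies the bulk of \cite{popescu1986general} (see also the Stacks Project for a modern account) and is well beyond what can be covered in a short sketch --- hence the paper invokes the theorem as a black box.
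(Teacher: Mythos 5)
The paper does not prove this statement at all: it is imported verbatim as Popescu's theorem \cite[Theorem 2.5]{popescu1986general} and used as a black box, so there is nothing in the paper to compare your argument against. Your outline correctly identifies the standard reduction (a Noetherian $A$-algebra is a filtered colimit of smooth finite-type $A$-algebras iff every finite-type $A$-algebra $B$ mapping to $A'$ factors through a smooth finite-type $C$) and the general shape of the N\'eron--Popescu strategy. But as a proof it has two problems. First, your key claim is wrong as stated: it is \emph{not} true that for every finite-type $A$-subalgebra $B \subset A'$ the image of the smoothing ideal $H_{B/A}$ generates the unit ideal of $A'$. Take $A = k$, $A' = k[[t]]$, $B = k[t^2, t^3]$: then $B$ is singular at the origin, the closed point of $\spec(A')$ maps to the origin, and $H_{B/A}A' \subset (t) \neq A'$. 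The actual content of the theorem is to \emph{enlarge} $B$ inside the factorization $B \to B' \to A'$ so that the radical of $H_{B'/A}A'$ strictly increases, with termination supplied by the Noetherian hypothesis on $A'$; geometric regularity is what makes each such enlargement possible, not something that makes the initial singular locus empty.

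Second, the single smoothing step --- which you correctly identify as ``the main obstacle'' --- is exactly where all the mathematical content lives, and you defer it entirely to the literature. That is a legitimate thing to do (it is what the paper does), but it means your text is a description of a known proof rather than a proof. If the intent is to match the paper, the honest move is simply to cite Popescu (or Swan, or the Stacks Project account of General N\'eron Desingularization) without the intermediate claims, since the one intermediate claim you do make concretely is false.
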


\begin{proof}[ of Theorem~\ref{bo-th}]
    Assume $U = \spec(S)$ is a local $V$-scheme geometrically regular over $V$. The proof proceeds by induction on $\dim(U)$. The base case is established in \cite[Appendix B]{colliot1997bloch}. Suppose $\dim(U) = n$ and Theorem~\ref{bo-th} holds for dimensions $< n$. Note $\dim(U_f), \dim(Z) < \dim(U)$. As a consequence of Popescu's theorem, we have
    \[S = \varinjlim S^\alpha, \quad S_f = \varinjlim S^\alpha_{f_\alpha}\]
    where $U^\alpha = \spec(S^\alpha)$ is a local regular essentially $V$-smooth scheme, and $f_\alpha$ is a local parameter whose vanishing locus $Z^\alpha$ is also local regular and essentially $V$-smooth (see \cite[Section 3]{PanEqui} for details). Clearly, $U^\alpha_{f_\alpha} = \spec(S^\alpha_{f_\alpha})$ is also regular and essentially $V$-smooth. By Theorem~\ref{geometricCase}, property (1) and the assumptions of Lemma~\ref{cohData} hold for $U^\alpha$, $U^\alpha_{f_\alpha}$, and $Z^\alpha$. Hence, property (2) holds. But property (2) passes through the limit due to the continuity of $\E$ and Grothendieck's limit theorem \cite[Section 6]{PanEqui}. Lemma \ref{cohData} is applied again. The induction step is proved.
\end{proof}

\appendix

\section{Lindel--Ojanguren--Gabber’s lemma over a DVR}

In this section, we provide a proof of the Lindel--Ojanguren--Gabber lemma over DVR. The proof follows the ideas of \cite{panin2024constantcasegrothendieckserreconjecture}. 

We use the following notation throughout this section. Let $R$ be a discrete valuation ring, and let $V = \spec R = \{v, \theta\}$, where $v$ is the closed point and $\theta$ is the generic point. Let $k = k(v) = R/\pi R$ denote the residue field. For a scheme $X$ and $x \in X$, define $x^{(2)} = \spec\left(\mathcal{O}_{X,x}/\mathfrak{m}_{X, x}^2\right)$. If $X$ is a $V$-scheme, let $x_v^{(2)}$ denote the closed fiber of $x^{(2)}$, i.e., $x_v^{(2)} = \spec(\mathcal{O}_{X_v,x}/\mathfrak{m}_{X_v,x}^2)$

\begin{theorem}[cf. {\cite[Theorem 3.5]{panin2024constantcasegrothendieckserreconjecture}}]\label{geomPres} 
    Let $X$ be an irreducible smooth affine $V$-scheme of relative dimension $n \geq 1$, and let $Z \subset X$ be a closed subset of pure codimension 1 containing no irreducible component of $X_v$. Given a finite set of closed points $\underline{x} \subset Z_v \subset X_v$. Suppose the residue field $k(v)$ is infinite or $\underline{x}$ consists of a single point. Then there exists a localization $\mathcal{X}$ of $X$, a localization $\mathcal{A}$ of $\AffSp^n_V$ at a finite set of closed points $\underline{y} \subset \AffSp^n_v$, an element $h \in \mathcal{O}_{\AffSp^n_V, \underline{y}}$, and an elementary distinguished square of the form (i.e. $\mathcal X$ is an \'etale neighbourhood of $\tau(Z)$): 
    \[\begin{tikzcd}
	{\mathcal X_{\tau^*h}} &[-25pt] {\mathcal X - \mathcal Z} & {\mathcal X} \\
	{\mathcal A_h} & {\mathcal A - \tau(\mathcal Z)} & {\mathcal A}
	\arrow["{=}"{description}, draw=none, from=1-1, to=1-2]
	\arrow[from=1-1, to=2-1]
	\arrow[hook, from=1-2, to=1-3]
	\arrow[from=1-2, to=2-2]
	\arrow["\tau", from=1-3, to=2-3]
	\arrow["{=}"{description}, draw=none, from=2-1, to=2-2]
	\arrow[hook, from=2-2, to=2-3]
    \end{tikzcd}\]
    where $\mathcal Z = Z \cap \mathcal X$.
\end{theorem}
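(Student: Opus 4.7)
The plan is to construct, after a Zariski shrinking of $X$ around $\underline{x}$, a finite morphism $\tau \colon X \to \AffSp^n_V$ whose first coordinate $h$ is a global equation for $Z$ in a neighborhood of $\underline{x}$, which is \'etale at $\underline{x}$, and whose restriction to $Z$ is a closed immersion near $\underline{x}$. Given such a $\tau$, I set $\underline{y} = \tau(\underline{x}) \subset \AffSp^n_v$, let $\mathcal{A} = \spec(\mathcal{O}_{\AffSp^n_V,\underline{y}})$, and let $\mathcal{X}$ be the connected component of $X \times_{\AffSp^n_V} \mathcal{A}$ containing $\underline{x}$ (equivalently, the localization of $X$ at $\underline{x}$). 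The equalities $\mathcal{A}_h = \mathcal{A} - \tau(\mathcal{Z})$ and $\mathcal{X}_{\tau^{*}h} = \mathcal{X} - \mathcal{Z}$ are then essentially tautological, and Nisnevich distinguishedness of the resulting square follows from \'etaleness of $\tau$ on $\mathcal{X}$ together with the fact that $\tau|_\mathcal{Z} \colon \mathcal{Z} \xrightarrow{\sim} \tau(\mathcal{Z})$ is an isomorphism.

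First I would set up a common local parameter for $Z$. Smoothness of $X/V$ of relative dimension $n$ together with the hypothesis that $Z$ has pure codimension $1$ and contains no component of $X_v$ yields, at each $x_i \in \underline{x}$, a regular system of parameters for $\mathcal{O}_{X, x_i}$ one of whose elements defines $Z$ locally. Since $\underline{x}$ is a finite semi-local set, an avoidance argument lets me choose, after a Zariski shrinking of $X$, a single global function $f \in \mathcal{O}(X)$ that defines $Z$ simultaneously in a neighborhood of every $x_i$. This $f$ will serve as the first coordinate of $\tau$.

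The main obstacle is the construction of the remaining coordinates $T_2, \ldots, T_n \in \mathcal{O}(X)$. Following the method of Panin and Stavrova \cite{panin2024constantcasegrothendieckserreconjecture}, I would embed $X$ as a closed subscheme of some $\AffSp^N_V$ and look for $T_2, \ldots, T_n$ as $V$-linear combinations of the ambient coordinates; the resulting map $\tau = (f, T_2, \ldots, T_n)$ must be simultaneously finite in a Zariski neighborhood of $\underline{x}$, induce an isomorphism on cotangent spaces at each $x_i$ (\'etaleness at $\underline{x}$), and induce, after restriction to $Z$, an isomorphism from $\mathfrak{m}_{\tau(x_i)}/\mathfrak{m}_{\tau(x_i)}^2$ to $\mathfrak{m}_{Z, x_i}/\mathfrak{m}_{Z, x_i}^2$ (closed immersion on $Z$ near $\underline{x}$). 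Each requirement is expressible as the non-vanishing of a determinant on the second-order neighborhoods $x_{i,v}^{(2)}$ and $x_{i,\theta}^{(2)}$, and therefore cuts out a dense open subscheme of the space of $V$-linear projections $\AffSp^N_V \to \AffSp^{n-1}_V$. The hypothesis on $k(v)$ enters exactly here: when $k(v)$ is infinite, finitely many dense open conditions over $v$ have a common $V$-rational point by the standard genericity argument, whereas when $\underline{x}$ consists of a single point, the conditions reduce to a single one that can be met directly over any residue field. Once such a $\tau$ is in hand, finiteness in a Zariski neighborhood of $\underline{x}$ follows from a Nakayama argument applied to the map $\mathcal{O}_{\AffSp^n_V,\underline{y}} \to \mathcal{O}_{X,\underline{x}}$, and the verification of the distinguished square from the first paragraph is then routine.
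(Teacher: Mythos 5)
Your outline has the right general shape (a finite/\'etale presentation in the style of Lindel--Ojanguren--Gabber, with the key conditions checked on second-order neighbourhoods and achieved by genericity over the infinite residue field), but it glosses over the two points that carry essentially all of the difficulty, and the way you propose to handle them would not work. First, \textbf{finiteness}. A map of the form $\tau=(f,T_2,\dots,T_n)$ with $f$ a local equation of $Z$ and $T_i$ generic $V$-linear forms is in general not finite, and not even quasi-finite away from $\underline{x}$ (already $(xy,y)\colon\AffSp^2\to\AffSp^2$ has a one-dimensional fibre). Finiteness cannot be produced by ``shrinking $X$ around $\underline{x}$'' --- an open subscheme of a finite scheme over the base is no longer finite --- and the ``Nakayama argument applied to $\mathcal{O}_{\AffSp^n_V,\underline{y}}\to\mathcal{O}_{X,\underline{x}}$'' is circular: Nakayama presupposes that $\mathcal{O}_{X,\underline{x}}$ is already a finite module, which is exactly what is in question. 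What is actually needed is that $\tau$ extend to a finite morphism of compactifications with $X=\tau^{-1}(\AffSp^n_V)$, so that no point ``at infinity'' lies over $\underline{y}$; the paper arranges this by compactifying via Zariski's Main Theorem, cutting out $X=\widebar{X}_{s_0}$, and mapping to a \emph{weighted} projective space by sections $s_i$ of $\mathcal{O}_{\widebar{X}}(d_i)$ with $d_i\gg 0$ (Serre vanishing), precisely because linear forms are not flexible enough. Relatedly, your claim that the jet conditions cut out dense opens in the space of $V$-linear projections fails when $k(x_i)/k$ is a nontrivial (possibly inseparable) extension: linear combinations of ambient coordinates need not induce the required isomorphism $x_{v}^{(2)}\to y_v^{(2)}$, which is why the paper needs Lemma~A.2 (Ojanguren/Lindel) to produce suitable target points and high-degree sections to realize the isomorphism.

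Second, \textbf{separation of $\mathcal{Z}$ inside $\tau^{-1}(\tau(\mathcal{Z}))$}. For the square to be elementary distinguished you need not only that $\tau$ is \'etale on $\mathcal{X}$ and that $\tau|_{\mathcal{Z}}$ is a closed immersion, but also that $\tau^{-1}(\tau(\mathcal{Z}))\cap\mathcal{X}=\mathcal{Z}$ (otherwise the arrow $\mathcal{X}-\mathcal{Z}\to\mathcal{A}-\tau(\mathcal{Z})$ is not even defined) and that $\tau(\mathcal{Z})$ is closed in $\mathcal{A}$. Your proposal never imposes the condition $\tau^{-1}(\tau(\underline{x}))\cap Z=\underline{x}$ (the paper's condition~(1) in Step~2, arranged via the interpolation set $Y_2$). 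Without it, $Z\times_{\AffSp^n_V}\mathcal{A}$ has closed points outside $\underline{x}$, the map $\mathcal{O}_{\mathcal{A},y_i}\to\mathcal{O}_{Z,x_i}$ need not be surjective, and $\tau(\mathcal{Z})$ need not be closed. Even granting it, one must still discard the branch locus $\mathcal{D}$ and the complementary piece $\mathcal{Z}'$ of $\tau^{-1}(\tau(\mathcal{Z}))=\mathcal{Z}\sqcup\mathcal{Z}'$, and verify (using factoriality of the semi-local regular rings involved) that the resulting schemes are principal localizations, as in Steps~4--5 of the paper. Declaring $\mathcal{X}$ to be ``the localization of $X$ at $\underline{x}$'' and calling the rest ``routine'' skips all of this; note also that the localization of $X$ at $\underline{x}$ is not the same as the connected component of $X\times_{\AffSp^n_V}\mathcal{A}$ containing $\underline{x}$.
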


\begin{lemma}\label{find-point}
    Let $X$ be a smooth variety over a field $k$ of relative dimension $n \geqslant 1$, and let $x \in X$ be a closed point. Then there exists a closed point $y \in \AffSp^n_k$ such that $k[x^{(2)}] \cong k[y^{(2)}]$ as $k$-algebras.
\end{lemma}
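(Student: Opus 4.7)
The plan is to reduce the lemma to showing that $A := k[x^{(2)}] = \mathcal{O}_{X,x}/\mathfrak{m}_x^2$ admits at most $n$ generators as a $k$-algebra, and then to exhibit such a generating set by constructing an étale morphism $\phi\colon U\to \AffSp^n_k$ on a Zariski neighborhood $U$ of $x$ that sends $x$ to a closed point with the same residue field.

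The reduction is a dimension count. Given any surjection $\varphi\colon k[T_1,\dots,T_n]\twoheadrightarrow A$, let $\mathfrak{p}=\varphi^{-1}(\mathfrak{m}_A)$. Then $\mathfrak{p}^2\subseteq \ker\varphi$ because $\mathfrak{m}_A^2=0$, so $\varphi$ factors through a surjection $\mathcal{O}_{\AffSp^n_k,y}/\mathfrak{m}_y^2 = k[T]/\mathfrak{p}^2 \twoheadrightarrow A$, where $y$ is the closed point cut out by $\mathfrak{p}$. Both sides have $k$-dimension $[k(x):k](n+1)$, so the surjection is an isomorphism.

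To exhibit the $n$ generators, I would first bound $\dim_K \Omega_{K/k}$ (where $K := k(x)$) using the conormal sequence
\[ \mathfrak{m}_x/\mathfrak{m}_x^2 \xrightarrow{\delta} \Omega_{X/k,x}\otimes_{\mathcal{O}_{X,x}} K \to \Omega_{K/k} \to 0 \]
together with the freeness of $\Omega_{X/k,x}$ of rank $n$ from smoothness; this yields $\dim_K\Omega_{K/k}\le n$, which combined with the primitive-element theorem for the separable part of $K/k$ implies $K=k(\beta_1,\dots,\beta_{m'})$ for some $m'\le n$. Picking lifts $\tilde\beta_i\in \mathcal{O}_{X,x}$ and a $K$-basis $t_1,\dots,t_n$ of $\mathfrak{m}_x/\mathfrak{m}_x^2$ adapted to $\delta$ (so that $t_1,\dots,t_{m'}\in\ker\delta$ and $\{dt_{m'+1},\dots,dt_n\}$ is a $K$-basis of $\operatorname{im}\delta$), the functions $f_i=\tilde\beta_i+t_i$ for $i\le m'$ and $f_j=t_j$ for $j>m'$ define a morphism $\phi=(f_1,\dots,f_n)\colon U\to \AffSp^n_k$ on an affine open $U\ni x$: the values $f_i(x)$ generate $K$ over $k$, so $y:=\phi(x)$ is a closed point with $k(y)=K$; and the differentials $\{df_i\}$ form a $K$-basis of $\Omega_{X/k,x}\otimes K$, because their images in $\Omega_{K/k}$ exhaust $d\beta_1,\dots,d\beta_{m'}$ while $\{dt_j:j>m'\}$ spans the kernel. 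Hence $\phi$ is étale at $x$ with trivial residue-field extension, and the induced local map $\mathcal{O}_{\AffSp^n_k,y}/\mathfrak{m}_y^2\to A$ is an isomorphism.

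The main obstacle is the separable case with $K\ne k$, where $\dim_K\Omega_{K/k}=0$ but $m'=1$: now $\ker\delta=0$, so the adapted basis above is unavailable. I would handle this by starting with an arbitrary $K$-basis $t_1,\dots,t_n$ of $\mathfrak{m}_x/\mathfrak{m}_x^2$, writing $d\tilde\beta=\sum_i c_i\,dt_i$ in $\Omega_{X/k,x}\otimes K$, and then replacing the lift $\tilde\beta$ by $\tilde\beta-\mu$ for a suitable $\mu\in\mathfrak{m}_x$ so as to force $c_1+1\ne 0$ in $K$; this is a single open condition in the field $K$, always achievable, after which $\{d(\tilde\beta-\mu)+dt_1,dt_2,\dots,dt_n\}$ is a $K$-basis of $\Omega_{X/k,x}\otimes K$ and the construction goes through as before.
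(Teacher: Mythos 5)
Your argument is correct, but it takes a genuinely different and more self-contained route than the paper. The paper's proof is short: it quotes Ojanguren's Lemma (for $k$ infinite) or Lindel's Lemma (for $k$ finite) to produce a closed point $y\in\AffSp^n_k$ with $k(y)\cong k(x)$, and then invokes the Cohen structure theorem to identify the completed local rings, hence the second-order neighborhoods. You instead (a) replace the Cohen structure theorem by a dimension count on the truncations $\mathcal{O}/\mathfrak{m}^2$ — which is all that is actually needed, since only the second infinitesimal neighborhood matters — and (b) replace the appeal to Ojanguren/Lindel by an explicit construction of a map $\phi\colon U\to\AffSp^n_k$ that is unramified at $x$ with $k(\phi(x))=k(x)$, built from the conormal sequence. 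Your approach has the advantage of treating finite and infinite $k$ uniformly and of making the role of smoothness (freeness of $\Omega_{X/k,x}$ of rank $n$) completely explicit; your handling of the separable case $K\neq k$, where $\ker\delta=0$, by perturbing the lift $\tilde\beta$ is exactly the right fix and is genuinely needed (a lift with $d\tilde\beta=0$ in $\Omega_{X/k,x}\otimes K$ can occur).

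One step is under-justified: the claim that $\dim_K\Omega_{K/k}\le n$ together with ``the primitive-element theorem for the separable part'' yields $K=k(\beta_1,\dots,\beta_{m'})$ with $m'\le n$ — and, as your adapted-basis construction actually requires, with $m'=\dim_K\Omega_{K/k}$ exactly when the latter is positive, so that $\{d\beta_i\}$ is a \emph{basis} of $\Omega_{K/k}$ and $\ker\delta$ has room for $t_1,\dots,t_{m'}$. For the purely inseparable part this is the theorem of Becker--MacLane (a finite purely inseparable extension is generated by exactly $\log_p[K:k[K^p]]=\dim_K\Omega_{K/k}$ elements), which is classical but is more than the primitive element theorem; it is essentially the field-theoretic content of the Ojanguren and Lindel lemmas that the paper cites. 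With that reference supplied, your proof is complete.
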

\begin{proof}
    There exists a closed point $y \in \AffSp^n_k$ with residue field $k(y) \cong k(x)$. In the case of an infinite field $k$, this is a consequence of Ojanguren’s Lemma \cite[VIII, Corollary 3.2.5]{Knus}. In the case of a finite field $k$, this is a consequence of Lindel’s Lemma \cite[Proposition 2]{Lindel}. Since $X$ is smooth, we may apply the Cohen structure theorem. Hence, the completions of local algebras $\mathcal O_{X, x}$ and $\mathcal O_{\AffSp^n_k, y}$ are isomorphic. In particular, we have the desired isomorphism.
\end{proof}

\begin{proof}[ of Theorem~\ref{geomPres}]
    Note that the question is local around points of $\underline{x}$, so we will sometimes shrink $X$ around $\underline{x}$ (i.e., replacing it with an open neighborhood $X^\circ$). In such cases, we will shrink $Z$ as well (i.e., replacing it with $Z \cap X^\circ$). The proof of the theorem is given in several steps.

    \textit{Step 1}. We start with an appropriate compactification of $X$. Take a morphism $\rho\colon X \to \ProjSp^n_V$ that is quasi-finite at $\underline{x}$. Thus $\rho$ is quasi-finite in some open neighborhood of $\underline{x}$. We shrink $X$ to this neighborhood, so $\rho$ becomes quasi-finite. By Zariski's Main Theorem, $\rho$ factors as an open immersion $X \subset \widebar{X}$ composed with a finite morphism $\widebar{\rho}\colon \widebar{X} \to \ProjSp^n_V$. In particular, $\widebar{X}$ is projective. We can also assume that $\widebar X$ is reduced. Let $\widebar{Z}$ be the closure of $Z$ in $\widebar{X}$. Note that $\widebar{Z}$ contains no irreducible component of $\widebar{X}_v$. 

    Consider the Cohen-Macaulay locus $X_{\operatorname{cm}}$ of $\widebar{X}$. Since $\widebar{X}$ is a scheme of finite type over a DVR, it follows that $X_{\operatorname{cm}}$ is open in $\widebar{X}$ \cite[Ex. 24.2]{Matsumura1989CRT}. Recall that a reduced 1-dimensional Noetherian ring is Cohen-Macaulay. Hence, any codimension $1$ point of $\widebar{X}$ lies in $X_{\operatorname{cm}}$. We conclude that the complement $\widebar{X} - X_{\operatorname{cm}}$ has codimension $\geqslant 2$ in $\widebar{X}$. In particular, $X_{\operatorname{cm}}$ intersects every irreducible component $\widebar{X}_{v,j}$ of $\widebar{X}_v$. Note that $\underline{x} \subset X \subset X_{\operatorname{cm}}$. We replace $X$ with $X_{\operatorname{cm}}$ and $Z$ with $\widebar{Z} \cap X_{\operatorname{cm}}$.
    
    Let $\mathcal{O}_{\widebar{X}}(1)$ be a very ample invertible sheaf on $\widebar{X}/V$ and $s_0 \in \Gamma(\widebar{X}, \mathcal{O}_{\widebar{X}}(1))$ be a section such that $\underline{x} \subset \widebar{X}_{s_0} \subset X$ and $s_0$ does not vanish identically on the irreducible components $\widebar{X}_{v,j}$ of $\widebar{X}_v$ \cite[Lemma 09NV]{stacks-project}. We shrink $X$ again, replacing it with $\widebar{X}_{s_0}$.

    \textit{Step 2}. There exists a finite surjective $V$-morphism $\pi \colon \widebar{X} \to \ProjSp_V(d_0, \dots, d_n)$ to the weighted projective space with $d_0 = 1$ such that:
    \begin{enumerate}
        \item $\pi^{-1}(\pi(\underline{x})) \cap \widebar{Z} = \underline{x}$
        \item $\pi|_{x_v^{(2)}} \colon x_v^{(2)} \to \pi(x)_v^{(2)}$ is a scheme isomorphism for any $x \in \underline{x}$  
    \end{enumerate}
    We construct $\pi$ by choosing sections $s_i$ of sheaves $\mathcal{O}_{\widebar{X}}(d_i)$, $i = 0, \dots, n$:  
    \[\pi = [s_0: \dots : s_n] \colon \widebar{X} \to \ProjSp_V(d_0, \dots, d_n)\]
    Let  
    \[\pi_r = \left(\frac{s_1}{s_0^{d_1}}, \frac{s_2}{s_0^{d_2}}, \dots, \frac{s_r}{s_0^{d_r}}\right) \colon \widebar{X}_{s_0} \to \AffSp^r_V\]
    By Lemma~\ref{find-point}, for each $x \in \underline{x}$, there exists a closed point $y = y(x) \in \AffSp^n_v$ such that $k[x_v^{(2)}] \cong k[y_v^{(2)}]$. Shifting $y$ to a rational vector, we may assume the points $y(x) \in (\mathbb{A}_{v}^1-\{0\})^n \subset \AffSp^n_v$ are pairwise distinct (recall that the residue field is infinite, or $\underline{x}$ consists of a single point). There are generators $f_1^x, \dots, f_n^x$ of the $k(v)$-algebra $k[x_v^{(2)}]$ such that the induced morphism $x_v^{(2)} \to \AffSp^n_v$ is an isomorphism onto $y_v^{(2)}$. We will choose $s_i$ and $d_i$ one by one, monitoring the following assertions:
    \renewcommand{\theenumi}{\roman{enumi}}%
    \begin{enumerate}
        \item $\dim \{s_0 = \dots = s_r = 0\}_v \leqslant n - (r + 1)$
        \item $\dim (\pi_r^{-1}\pi_r(\underline x) \cap \widebar{Z}_v) \leqslant n - (r+1)\;$ 
        \item $(s_r/s_0^{d_r})|_{x_v^{(2)}} = f_r^x$ for any $x \in \underline x$.
    \end{enumerate}
    For $r=n$ we read assertions $(\mathrm{i})$ and $(\mathrm{ii})$ as \[\{s_0 = \dots = s_n = 0\}_v = \varnothing \quad \text{, and} \quad \pi_n^{-1}\pi_n(\underline x) \cap \widebar{Z}_v = \underline x \cap \widebar{Z}_v\]
    
    Section $s_0 \in \Gamma(\widebar{X}, \mathcal{O}_{\widebar{X}}(1))$ is already constructed in Step 1. We now construct section $s_r$, assuming that sections $s_0, \dots, s_{r-1}$ are already defined. Let $Y_1$ be a finite closed subset of $\{s_0 = \dots = s_{r-1} = 0\}_v$ containing a point in each irreducible component. Let $Y_2$ be a finite closed subset of $\pi_{r-1}^{-1}(\pi_{r-1}(\underline{x})) \cap \widebar{Z}_v$ containing a point in each irreducible component and disjoint from $\underline{x}$ (we ignore irreducible components that consist of a single point $x \in \underline{x}$).
    
    Note that for a closed subscheme $Y \subset \widebar{X}$, the restriction homomorphism $H^0(\widebar{X}, \mathcal{O}_{\widebar{X}}(d)) \to H^0(Y, \mathcal{O}_Y(d))$ is surjective for large $d$. This follows from Serre's vanishing theorem \cite[Theorem III.5.2b]{hartshorne2013algebraic}, which implies $H^1(\widebar{X}, \mathcal{I}_Y(d)) = 0$ for large $d$. Let $Y = Y_1 \sqcup Y_2 \sqcup_{x \in \underline{x}} x_v^{(2)}$, and choose $d_r$ sufficiently large. Thus, there exists a section $s_r \in \Gamma(\widebar{X}, \mathcal{O}_{\widebar{X}}(d_r))$ such that:
    \renewcommand{\theenumi}{\roman{enumi}}%
    \begin{enumerate}
        \item $s_r(y) \neq 0$ for $y \in Y_1$
        \item $s_r(y) = 0$ for $y \in Y_2$
        \item $s_r|_{x_v^{(2)}} = f_r^x s_0^{d_r}|_{x_v^{(2)}}$ for $x \in \underline{x}$
    \end{enumerate}
    Clearly, assertions (i) and (iii) are satisfied. Assertion (ii) holds because $\pi_r(y) \neq \pi_r(x)$ for $x \in \underline{x}$ and $y \in Y_2$ (by construction of $s_r$), so $y \not\in \pi_r^{-1}(\pi_r(\underline{x}))$. Additionally, $\pi_r^{-1}(\pi_r(\underline{x})) \subset \pi_{r-1}^{-1}(\pi_{r-1}(\underline{x}))$.

    By assertion $(\mathrm{i})$, we have a globally defined morphism $\pi$. Since $\{s_0 = \dots = s_n = 0\}$ is a closed subset of the projective $V$-scheme $\widebar{X}$ with empty closed fiber, it is itself empty. Furthermore, $\pi$ is affine and projective, hence finite and surjective by a dimension argument. By assertion $(\mathrm{ii})$, $\pi_n^{-1}(\pi_n(\underline{x})) \cap \widebar{Z}_v = \underline{x}$, so $\pi^{-1}(\pi(\underline{x})) \cap \widebar{Z} = \underline{x}$. Assertion $(\mathrm{iii})$ implies $(2)$.

    \renewcommand{\theenumi}{\arabic{enumi}}%
    \textit{Step 3}. The construction above yields the following corollaries:
    \begin{enumerate}
        \setcounter{enumi}{2}
        \item $X = \pi^{-1}(\AffSp^n_V)$, and $\pi|_X \colon X \to \AffSp^n_V$ is finite and flat.
        \item $\pi$ is \'etale at points of $\underline{x}$.
        \item $\pi$ induces an isomorphism of residue fields $k(\pi(x)) \to k(x)$ for $x \in \underline{x}$.
    \end{enumerate}
    Since $X = \widebar{X}_{s_0}$, we have $X = \pi^{-1}(\AffSp^n_V)$ by construction, and $\pi|_X$ is finite. Flatness of $\pi|_X$ follows from the miracle flatness \cite[Theorem 23.1]{Matsumura1989CRT}. The isomorphism $(2)$ and smoothness of $\widebar{X}_v$ at $\underline{x}$ imply $\pi_v \colon \widebar{X}_v \to \ProjSp_v(d_0, \dots, d_n)$ is unramified at $\underline{x}$. Thus, $\pi$ is unramified (and hence \'etale) at $\underline{x}$ by \cite[Prop. VI.3.6]{AK}. Finally, $(5)$ follows directly from $(2)$.

    \textit{Step 4}. Introduce the following notation:
    \[\underline{y} = \pi(\underline{x}), \quad 
    \mathcal{A} = \spec(\mathcal O_{\AffSp^n_V, \underline{y}}), \quad 
    \mathcal{X} = X \times_{\AffSp^n_V} \mathcal{A}, \quad 
    \mathcal{Z} = Z \times_{\AffSp^n_V} \mathcal{A}.\]
    Consider the induced map $\pi\colon \mathcal{X} \to \mathcal{A}$. Let $\mathcal{D} \subset \mathcal{X}$ denote the branch locus of $\pi$. Define $\mathcal{X}' = \mathcal{X} - \mathcal{D}$, so the restriction $\pi'\colon \mathcal{X}' \to \mathcal{A}$ is étale. Since $\pi$ is étale at $\underline{x}$, we have $\underline{x} \subset \mathcal{X}'$. Also, $\pi$ is open and $\underline{y} = \pi(\underline{x})$ consists of closed points in the semi-local scheme $\mathcal{A}$, hence $\pi$ is surjective. By (1), $\mathcal{Z} \cap \mathcal{D} = \varnothing$, so $\mathcal{Z} \subset \mathcal{X}'$. The finiteness of $\pi$, combined with (1) and (2), implies that $\mathcal O_{\mathcal A, \pi(x)} \to \mathcal{O}_{\mathcal{Z}, x}$ is surjective \cite[Lemma II.7.4]{hartshorne2013algebraic}. Since $\pi(x)$ are distinct for $x \in \underline{x} \subset \mathcal{Z}$, the morphism $\pi|_{\mathcal{Z}} \colon \mathcal{Z} \to \mathcal{A}$ is set-theoretically injective, and the map $\mathcal{O}_{\mathcal{A}} \to \pi_*\mathcal{O}_{\mathcal{Z}}$ is surjective. Thus, $\pi|_{\mathcal{Z}} \colon \mathcal{Z} \to \mathcal{A}$ is a closed embedding. Let $s\colon \pi(\mathcal{Z}) \to \mathcal{Z}$ be the inverse isomorphism, which is a section of the étale morphism $\pi'|_{\pi'^{-1}(\pi(\mathcal{Z}))}$. This gives $\pi'^{-1}(\pi(\mathcal{Z})) = \mathcal{Z} \sqcup \mathcal{Z}'$. Notice $\mathcal Z' \cap \underline x = \varnothing$ since $\underline x \subset \mathcal Z$. Define $\mathcal{X}'' = \mathcal{X}' - \mathcal{Z}'$. We thus obtain an elementary distinguished square:
    \[\begin{tikzcd}
	{\mathcal X'' - \mathcal Z} & {\mathcal X''} \\
	{\mathcal A - \mathcal \pi(\mathcal Z)} & {\mathcal A}
	\arrow[hook, from=1-1, to=1-2]
	\arrow[from=1-1, to=2-1]
	\arrow["\pi", from=1-2, to=2-2]
	\arrow[hook, from=2-1, to=2-2]
    \end{tikzcd}\]

    \textit{Step 5}. The scheme $\mathcal{X}$ is affine, regular, and semi-local, hence factorial. The branch locus $\mathcal{D}$ is a closed subset of pure codimension 1 \cite[Theorem 6.8]{AK}. Therefore, $\mathcal{X}' = \mathcal{X}_f$ for some $f \in \Gamma(\mathcal{X}, \mathcal{O}_{\mathcal{X}})$. In particular, $\mathcal{X}'$ is also regular and factorial. Similarly, $\mathcal{Z}' \subset \mathcal{X}'$ is a closed subset of pure codimension 1, so $\mathcal{X}'' = \mathcal{X}'_g$. We conclude that $\mathcal{X}''$ is a localization of $X$.

    The scheme $\mathcal{A}$ is affine, regular, and semi-local, hence factorial. The subset $\pi(\mathcal{Z}) \subset \mathcal{A}$ is closed of pure codimension 1, so $\mathcal{A} - \pi(\mathcal{Z}) = \mathcal{A}_h$. Hence, $\mathcal{X}'' - \mathcal{Z} = \mathcal{X}''_{\pi^*h}$.
\end{proof}

\bibliographystyle{acm}
\bibliography{ref}

\end{document}